\documentclass[a4paper,10pt]{article}

\usepackage[utf8]{inputenc}
\usepackage{authblk}
\usepackage{amsthm,amsmath,amssymb,latexsym,dsfont}

\usepackage{cite}

\usepackage{enumerate}

\theoremstyle{definition}

\newtheorem{theorem}{Theorem}[section]
\newtheorem{proposition}[theorem]{Proposition}
\newtheorem{lemma}[theorem]{Lemma}
\newtheorem{corollary}[theorem]{Corollary}
\newtheorem{remark}[theorem]{Remark}

\newcommand{\N}{\mathbb{N}}
\newcommand{\Z}{\mathbb{Z}}

\newcommand{\R}{\mathbb{R}}

\newcommand{\A}{\mathcal{A}}

\newcommand{\C}{\mathcal{C}}
\newcommand{\F}{\mathcal{F}}
\newcommand{\G}{\mathcal{G}}
\newcommand{\HH}{\mathcal{H}}

\newcommand{\Y}{\mathcal{Y}}

\renewcommand{\P}{\mathbb{P}}
\newcommand{\E}{\mathbb{E}}

\newcommand{\ind}{\mathds{1}}

\newcommand{\eps}{\varepsilon}

\DeclareMathOperator{\cov}{Cov}
\DeclareMathOperator{\var}{Var}

\usepackage[margin=30mm]{geometry}
\usepackage[hidelinks]{hyperref}

\hypersetup{pdftitle={Robust Nonparametric Methods for alpha-Mixing Sequences via Median-of-Means},
	pdfauthor={Attila Lovas and Marcell Tukora}}

\newcommand{\Rb}{\mathcal{R}}          
\newcommand{\Ker}{\mathcal{K}}         
\newcommand{\VaR}{\mathrm{VaR}}
\newcommand{\CVaR}{\mathrm{CVaR}}
\newcommand{\med}{\operatorname{median}}

\title{Robust Nonparametric Methods for $\alpha$-Mixing Sequences\\ via Median-of-Means\thanks{This research was supported by the J\'anos Bolyai Research Scholarship of the Hungarian Academy of Sciences. Additional support was provided by the Thematic Excellence Program 2021, funded by the National Research, Development and Innovation Office under the subprogram ``Artificial intelligence, large networks, data security: mathematical foundation and applications'', and by the National Research, Development and Innovation Office through OTKA grant K 143529.}}

\author[1,2]{Attila Lovas\thanks{Corresponding author. E-mail: \texttt{lovas@renyi.hu}}}
\author[3]{Marcell Tukora}

\affil[1]{HUN-REN Alfr\'ed R\'enyi Institute of Mathematics, Budapest, Hungary}
\affil[2]{Budapest University of Technology and Economics, Budapest, Hungary}
\affil[3]{HUN-REN Wigner Research Centre for Physics, Budapest, Hungary}

\date{October 10, 2026}

\begin{document}

\maketitle

\begin{abstract}
The median-of-means (MoM) estimator is a robust alternative to the sample mean for heavy-tailed or contaminated data. While its non-asymptotic properties are well understood for i.i.d.\ samples, much less is known for dependent sequences. We establish non-asymptotic deviation bounds for the MoM estimator computed from consecutive blocks of an identically distributed $\alpha$-mixing sequence, both for a single mean and uniformly over a class of functions (the median-of-means process). All bounds remain valid, with explicitly adjusted constants, when a sufficiently small fraction of the blocks is adversarially corrupted. As applications, we obtain deviation bounds for a median-of-means Nadaraya--Watson regression estimator, for the MoM kernel density estimator, and for empirical and Bahadur-type estimators of Value-at-Risk and Conditional Value-at-Risk. Finally, we show that the sampling of bulk materials, where the measurements form a spatially dependent random field, reduces to the one-dimensional setting if the samples are taken in well-separated groups.

\end{abstract}

\section{Introduction}\label{sec:intro}

The problem of estimating the expectation $\mu = \E[X]$ of a random variable $X$ from a finite sample $X_1, \ldots, X_n$ goes back to Jacob Bernoulli’s \emph{Ars Conjectandi} (1713), where he proved that the relative frequency of an event converges to its probability.
Later, in his work on the method of least squares, Gauss showed that the sample mean has the smallest mean squared error among the linear unbiased estimators of the expectation.

However, as pointed out by Lugosi and Mendelson~\cite{lugosi2019mean}, the mean squared error, $\E[(\widehat{\mu}_n - \mu)^2]$ can be a misleading measure of performance when the error $|\widehat{\mu}_n - \mu|$ is highly variable. In many applications, notably in machine learning, one needs estimators that are close to the true mean with high probability already for moderate sample sizes, not only in expectation. From this point of view, the \emph{median-of-means} estimator has much better deviation properties than the sample mean for heavy-tailed data.

The median-of-means estimator is obtained by partitioning the sample into equally sized blocks, computing the sample mean within each block, and taking the median of the resulting block means.
This procedure was first introduced by Nemirovsky and Yudin~\cite{yudin1983problem} in the context of stochastic convex optimization.
It was later rediscovered and popularized in the computer science and machine learning literature, notably by Jerrum, Valiant, and Vazirani~\cite{jerrum1986random}, by Alon, Matias, and Szegedy~\cite{alon1996space}, and in recent years by Lugosi and Mendelson~\cite{lugosi2019mean}; see also~\cite{bubeck2013bandits,devroye2016sub}.

Building on this idea, Humbert, Le Bars, and Minvielle~\cite{humbert2022robust} defined the MoM kernel density estimator (MoM-KDE) as the pointwise median of kernel density estimates computed on disjoint blocks, and proved finite-sample $L^\infty$ bounds that hold when fewer than half of the blocks contain outliers.
In parallel, Lugosi and Mendelson~\cite{lugosi2019risk} studied the classical regression and statistical learning problem, where the goal is to predict the value of a random variable based on the observation of another.
They compared candidate predictors through medians of blockwise risk differences (\emph{median-of-means tournaments}) and obtained regression procedures with sub-Gaussian-type guarantees under weak moment assumptions, where empirical risk minimization can fail.

Beyond the i.i.d.\ setting, medians of averages have been used in the Markov chain Monte Carlo literature, where the median of several independent runs yields fixed-precision guarantees of the form $\P(|\hat I-I|\le\eps)\ge1-\alpha$~\cite{niemiro2009fixed,latuszynski2011rigorous}. Lerasle and Oliveira~\cite{lerasle2011robust} showed by a coupling argument that their median-of-means-type estimators can also be used for stationary $\beta$- and $\phi$-mixing data. 

More recently, median-of-means-type M-estimation under $\alpha$-mixing was considered for high-dimensional sparse regression, using blocks separated in time~\cite{mittal2024adaptive}, and Mulumudi et al.~\cite{mulumudi2026generalization} studied learning with the Conditional Value-at-Risk (CVaR) under heavy tails, including generalization bounds for $\beta$-mixing data and a truncated median-of-means estimator robust to adversarial contamination, a median-of-means-type statistic was used for testing changes in the mean of high-dimensional heavy-tailed data, with an extension to temporally dependent data~\cite{li2026robust}

A median-of-means estimator for corrupted, time-correlated data was also analyzed for temporal-difference learning with corrupted Markovian data~\cite{maity2025adversarially}, and empirical risk minimization with heavy-tailed, $\beta$-mixing data was studied in~\cite{roy2021empirical}. 

The closely related work of Xu et al.~\cite{xu2026non} uses the same blocking technique to
construct median-of-means estimators for high-dimensional mean vectors and autocovariance
matrices of heavy-tailed time series. The present paper differs in two respects. First,
Xu et al.\ work with functional dependence measures, whereas our results are established
under $\alpha$-mixing; neither condition implies the other (a stationary AR(1) process
with Bernoulli innovations is not strongly mixing~\cite{andrews1984non}, yet its functional
dependence measure decays geometrically), so the two sets of results are complementary.
Second, beyond means and autocovariance matrices, we study the median-of-means process and
derive results for a broader range of statistical problems, including nonparametric
regression, kernel density estimation, risk measures and sampling from spatially dependent
random fields.

Financial returns, sensor data, and measurements on bulk materials are dependent, and they may be heavy tailed or contain gross errors. To our knowledge, no non-asymptotic bounds are available for the median-of-means estimator and the median-of-means process computed from consecutive blocks of a \emph{single} $\alpha$-mixing trajectory that hold under the sole moment assumption $\E|X|^r<\infty$ for some $r>1$, uniformly over function classes and with adversarially corrupted blocks.

\smallskip
We measure the dependence between distant parts of the sequence $X_1, X_2, \ldots$ by the \emph{$\alpha$-mixing} introduced by Rosenblatt~\cite{rosenblatt1956central}.
Among the classical mixing conditions ($\alpha$-, $\beta$-, $\phi$- and $\rho$-mixing), $\alpha$-mixing (strong mixing) is the weakest; see Bradley~\cite{Bradley2005} for a survey.
For completeness, we recall the relevant definitions.
Throughout this paper, all random variables are defined on a common probability space $(\Omega, \F, \P)$.
Given two sub-$\sigma$-algebras $\G, \HH \subset \F$, the \emph{measure of dependence} between them is defined as
\begin{equation}\label{eq:dep_meas}
	\alpha(\G, \HH)
	= \sup_{G \in \G,\, H \in \HH}
	\left| \P(G \cap H) - \P(G)\P(H) \right|.
\end{equation}
For a stochastic process $(X_n)_{n \in \N}$, we define the associated sequence of \emph{mixing coefficients} by
\begin{equation}\label{eq:mix_coef}
	\alpha^X(n)
	:= \sup_{k \in \N}
	\alpha\!\left(
	\sigma(X_j : j \le k),\,
	\sigma(X_j : j \ge k + n)
	\right).
\end{equation}
Here and below $\N=\{0,1,2,\ldots\}$. The sequence $(\alpha^X(n))_{n \in \N}$ is clearly nonincreasing and bounded, and the process $(X_n)_{n\in\N}$ is said to be \emph{strongly mixing} (or $\alpha$-mixing) if $\alpha^X(n) \to 0$ as $n \to \infty$.

Under suitable moment and decay conditions on $\alpha^X(n)$, strong mixing provides a framework for extending classical probabilistic limit theorems beyond the i.i.d.\ setting, including the strong law of large numbers of McLeish~\cite{mcleish1975maximal}, the $L^1$-law of large numbers of Hansen~\cite{hansen2019weak}, and the central limit theorem of White~\cite{white2014asymptotic}.
Typically, such results require that $\alpha^X(n)$ converges to zero sufficiently fast and that $\sup_{n \in \N} \E|X_n|^r < \infty$ for some sufficiently large exponent $r>1$.

The mixing coefficients of an observed process are rarely known, and estimating them from a single trajectory is difficult.
McDonald, Shalizi, and Schervish~\cite{mcdonald2011estimating, mcdonald2015estimating} studied the histogram-based estimation of $\beta$-mixing coefficients. More recently, Khaleghi and Lugosi~\cite{khaleghi2023inferring} established strong consistency results for certain estimators of $\alpha$- and $\beta$-mixing coefficients for stationary ergodic processes.

In practice, one therefore relies on known decay rates for specific models. For example, stationary geometrically ergodic Harris recurrent Markov chains satisfy $\alpha(n)=O(\rho^n)$ for some $\rho\in(0,1)$; see, e.g., Section~2.4 of Doukhan~\cite{doukhan1995mixing} or Meyn and Tweedie~\cite{meyn2012markov}. 
Decay rates for autoregressive (AR), autoregressive moving-average (ARMA), and nonlinear autoregressive processes with exogenous inputs (ARX) are summarized in Doukhan~\cite{doukhan1995mixing}; for generalized autoregressive conditionally heteroskedastic (GARCH) and stochastic volatility models, see Carrasco and Chen~\cite{carrasco2002mixing}.
Recent papers \cite{lovas2024transition,lovas2025mixing} investigate random iterations driven by exogenous inputs and establish upper bounds on the $\alpha$-mixing coefficients of the resulting processes in terms of the $\alpha$-mixing coefficients of the input process. These results apply to a broad range of models arising in econometrics, queueing theory, and machine learning.

\smallskip
In this work, we demonstrate how the median-of-means estimator can be employed to robustly handle $\alpha$-mixing sequences. In particular, we show that this approach enables reliable estimation in several key statistical tasks, including nonparametric regression, kernel density estimation, and empirical and Bahadur-type estimation of risk measures such as Value-at-Risk (VaR) and CVaR, and that it extends to spatially dependent measurements, such as samples taken from a heap of bulk material.

\smallskip
Our main results are as follows.
\begin{itemize}
	\item We prove non-asymptotic deviation bounds for the median-of-means estimator of
	the mean of an identically distributed $\alpha$-mixing sequence with polynomially decaying mixing coefficients under the sole moment assumption $\E|X_0|^r<\infty$, $r>1$ (Theorem~\ref{thm:main}, Corollary~\ref{cor:specmix}). For every fixed tolerance $\delta>0$ and every fixed block size $B$ above an explicit threshold, the bounds decay as $O(n^{-a}\log^a n)$ under polynomial ($a>1$) and as $O(\kappa^{\sqrt n})$ under geometric mixing.

	\item We extend these bounds uniformly over a class of functions, i.e.\ to the median-of-means process (Theorem~\ref{thm:MoMproc}). We give a separability condition~\eqref{eq:S} under which the single-block suprema and, for continuous median rules, the supremum of the median-of-means process are random variables (Lemma~\ref{lem:meas}), and show that it is not implied by the Vapnik--Chervonenkis (VC) property (Remark~\ref{rem:VC}). The bounds remain valid when up to $n_o$ of the $K$ blocks are arbitrarily (even adversarially) corrupted.
	
	\item We apply the results to nonparametric regression with heavy-tailed responses (the MoM Nadaraya--Watson estimator, Section~\ref{sec:regression}), to kernel density estimation, extending the results of~\cite{humbert2022robust} to $\alpha$-mixing data (Section~\ref{sec:KDE}), and to VaR and CVaR (Section~\ref{sec:VaR}), where we provide explicit bounds on the probability that the estimation error exceeds a prescribed tolerance, both for empirical and for Bahadur-type estimators.

	\item We show that the sampling of bulk materials, such as heaps of cement or ore, where the measurements form a spatially dependent random field rather than a sequence, reduces to the one-dimensional setting (Section~\ref{sec:spatial}). 
\end{itemize}

\paragraph{Formal verification.} 
The main arguments of Sections~\ref{sec:main}--\ref{sec:spatial} have been formalized in the Lean~4 proof assistant on top of the Mathlib library: the median counting arguments, the measurability lemma, the transfer of mixing coefficients to blocks, the covariance inequality for bounded strongly mixing variables, the single-block bounds, Theorem~\ref{thm:MoMproc} with contamination, the deviation bounds of Sections~\ref{sec:regression}--\ref{sec:VaR} (with the single-block probabilities as hypotheses, which are supplied by separate formal lemmas), Proposition~\ref{prop:serial} and Corollary~\ref{cor:spatial}. Not formalized are the probabilistic part of the proof of Lemma~\ref{lem:Hansen} (hence of Theorem~\ref{thm:main} and Corollary~\ref{cor:specmix}, of which only the constants are checked), the identification of $G(q_\alpha)$ with $\CVaR_\alpha$ in Lemma~\ref{lem:RU}, and the asymptotic statements (Corollaries~\ref{cor:NW} and~\ref{cor:KDE}, Proposition~\ref{prop:KDEL1}(ii)). Within the formalized part, the only external input is Rio's inequality (Lemma~\ref{lem:Rio61}), which enters the formal statements as an explicit hypothesis; a polynomial version of Theorem~\ref{thm:MoMproc} avoiding it (Remark~\ref{rem:decrate}) is verified completely.

\section{Main results}\label{sec:main}

Let $(X_n)_{n\in\N}$ be a sequence of $\R$-valued identically distributed variables with $\E[X_0]=0$ for simplicity, and for a fixed integer $B\ge 1$, define the process of block averages:
\begin{equation}\label{eq:movAvg}
	\mu_k = \frac{1}{B}\sum_{l=1}^{B} X_{kB+l}, \quad k\in\N.
\end{equation}

Assume that $n=K\cdot B$ for some integer $K\ge1$. The \emph{median-of-means (MoM)} estimator is
\begin{equation}\label{eq:MoM}
	\hat{\mu} = \med(\mu_0,\ldots,\mu_{K-1}).
\end{equation}
Here and below, $\med(x_0,\ldots,x_{K-1})$ denotes any (measurable) choice of a median, that is, any $m\in\R$ with
\[
\#\{k: x_k\le m\}\ge K/2
\quad\text{and}\quad
\#\{k: x_k\ge m\}\ge K/2,
\]
e.g.\ the lower median $\min\{m: \#\{k: x_k\le m\}\ge K/2\}$. All results below hold for every such choice.
The aim of this section is to establish the main result of the paper,
namely, a counterpart of the well-known concentration inequality for the MoM estimator in the i.i.d.\ setting,
extended here to the case where $\E|X_0|^r<\infty$ for some $r>1$ and $(X_n)_{n\in\N}$ is merely $\alpha$-mixing with polynomially decaying coefficients.

\smallskip 
The following inequality provides a non-asymptotic exponential bound on the deviation of the normalized partial sums of a weakly dependent binary sequence.
This result serves as a replacement for Hoeffding's inequality, which is typically used in the proof of the concentration bound for the median-of-means estimator in the i.i.d.\ case.
\begin{lemma}\label{lem:Rio61}
	Let $(Z_n)_{n\in\N}$ be a sequence of $\{0,1\}$-valued random variables, and define
	$S_n = \sum_{k=1}^n \bigl(Z_k - \E[Z_k]\bigr)$, $n \ge 1$.
	Then, for any integers $1 < q < n$ and any $\delta \ge \frac{7q}{2n}$, we have
	\[
	\P\!\left( \frac{1}{n} |S_n| \ge \delta \right)
	\le
	4 \exp\!\left( -\frac{\delta n}{7q}
	\log\!\left( 1 + \frac{(2/7)\delta n}{\,n+q\,} \right) \right)
	+ \frac{28}{\delta}\,\alpha^Z(q+1).
	\]
\end{lemma}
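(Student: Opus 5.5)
This is Rio's Fuk--Nagaev-type inequality for strongly mixing sequences, specialised to bounded variables; the paper takes it as external input. The plan reproduces Rio's argument: Berbee-free coupling via Rio's quantile-coupling lemma for real random variables, which replaces blocks by independent copies at a cost controlled by $\alpha$, followed by Bennett's inequality. Set $\xi_k = Z_k-\E Z_k\in[-1,1]$. Split $\{1,\dots,n\}$ into consecutive blocks of length $q$ (the last one possibly shorter). Write $U_j=\sum_{k\in I_j}\xi_k$, so $|U_j|\le q$. The odd-indexed and even-indexed blocks are separated by gaps of length $q$. Hence two blocks of the same parity are at distance at least $q+1$, and the relevant mixing coefficient is $\alpha^Z(q+1)$. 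Then $\P(|S_n|\ge \delta n)\le \P(|\sum_{j\ \mathrm{odd}}U_j|\ge \delta n/2)+\P(|\sum_{j\ \mathrm{even}}U_j|\ge\delta n/2)$. Each term splits into two one-sided tails, which produces the factor $4$.

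\textbf{Coupling step.} For each parity class, use Rio's coupling lemma recursively (Rio, \emph{Asymptotic Theory of Weakly Dependent Random Processes}, Lemma 5.2). This gives independent $U_j^*$ with $U_j^*\overset{d}{=}U_j$ and
\[
\E|U_j-U_j^*|\le 2\int_0^{\alpha(q+1)}Q_{U_j}(u)\,du\le 2q\,\alpha^Z(q+1),
\]
since $|U_j|\le q$. Markov's inequality on $\sum_j|U_j-U_j^*|$, with threshold a fixed fraction of $\delta n$ (say $\delta n/14$), contributes about $(14/(\delta n))\cdot(n/q)\cdot 2q\,\alpha^Z(q+1)$ per parity class. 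Summing the two classes gives at most $\frac{28}{\delta}\alpha^Z(q+1)$ after the constants are tuned.

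\textbf{Bennett step.} The remaining deviation, of order $\delta n/2-\delta n/14\ge 2\delta n/7$ scaled appropriately, is handled for the independent bounded centered $U_j^*$ by Bennett's inequality
\[
\P\Bigl(\sum U_j^*\ge x\Bigr)\le \exp\Bigl(-\tfrac{x}{q}\bigl((1+\tfrac{v}{xq})\log(1+\tfrac{xq}{v})-1\bigr)\Bigr),
\]
with $v=\sum\var(U_j^*)$. Bound the variance crudely: since $|U_j^*|\le q$, we have $\var(U_j^*)\le q\,\E|U_j^*|\le q^2$. This gives $v\lesssim q(n+q)$, where the $n+q$ accounts for the ceiling on the number of blocks. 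Next use $(1+1/t)\log(1+t)-1\ge \tfrac12\log(1+t)$, or a similar elementary lower bound, to reach the form $\exp(-\frac{\delta n}{7q}\log(1+\frac{(2/7)\delta n}{n+q}))$.

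\textbf{Main obstacle.} The conceptual steps are standard. The difficulty lies entirely in the constant bookkeeping needed to land exactly on $7$, $2/7$, $28$ and the condition $\delta\ge 7q/(2n)$. That condition is what ensures the Bennett argument $xq/v$ is in the regime where the logarithmic lower bound applies and the block count is at least one. I would first fix the split of $\delta n$ between the coupling error and the Bennett term, then adjust until the displayed constants appear. Failing that, I would cite Rio's Theorem 6.1 (with $r=1$-type choice, i.e.\ $\lambda$ proportional to $\delta n$) directly, as the paper does.
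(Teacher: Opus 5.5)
Your plan is sound. Once two slips in the constants are fixed (second paragraph), it proves the statement, even in a slightly stronger form, by a different route from the paper's.

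\textbf{Comparison.} The paper proves nothing from scratch. It applies Rio's Theorem~6.1, a \emph{maximal} inequality for $\sup_{k\le n}|S_k|$, as a black box to $Y_k=Z_k\ind_{\{k\le n\}}$ with $M=1$ and $v_q=q(n+q)\ge\lceil n/q\rceil q^2$. It then converts Rio's mixing coefficient, which is twice $\alpha^Z$, and substitutes $\lambda=\frac27\delta n$. The hypothesis $\delta\ge\frac{7q}{2n}$ is exactly the condition $\lambda\ge q$ under which Rio's inequality is stated. You instead re-derive the non-maximal inequality from its ingredients: parity blocks, recursive quantile coupling, Markov's inequality, and Bennett's inequality. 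The paper's route takes two lines. Yours is self-contained up to the coupling lemma and Bennett's inequality, and since only $|S_n|$ is needed, it never uses $\delta\ge\frac{7q}{2n}$ and gives a constant smaller than $28$. Your explanation of that hypothesis is therefore off. The bound $(1+\frac1t)\log(1+t)-1\ge\frac12\log(1+t)$ is equivalent to $\log(1+t)\ge\frac{2t}{2+t}$, which holds for every $t>0$. The condition is only the hypothesis of Rio's theorem, and your argument does not need it.

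\textbf{Fixing the constants.} The bookkeeping you flag as the obstacle does close, but not with the numbers you wrote.
\begin{enumerate}[(1)]
\item Rio's coupling lemma is stated with his coefficient, which equals $2\alpha^Z$. Your per-block bound $2q\,\alpha^Z(q+1)$ is false in the paper's normalization. For example, take $X$ uniform on $\{-1,1\}$ and $\mathcal A=\sigma(X)$: any independent copy satisfies $\E|X-X^*|=1$, while the paper's $\alpha(\mathcal A,\sigma(X))=\frac14$. The correct bound is $\E|U_j-U_j^*|\le 4\|U_j\|_\infty\,\alpha^Z(q+1)\le 4|I_j|\,\alpha^Z(q+1)$.
\item With the Markov threshold $\delta n/14$, even your own count gives $28/\delta$ per class, hence $56/\delta$ in total.
\end{enumerate}
Put the threshold at $\frac{3}{14}\delta n$ per class instead. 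This leaves exactly $x=\frac27\delta n$ for the independent part, since $\frac12\delta n-\frac3{14}\delta n=\frac27\delta n$.
\begin{itemize}
\item Coupling term: since $\sum_j|I_j|=n$ over both classes, it is at most $\frac{14}{3\delta n}\cdot 4n\,\alpha^Z(q+1)=\frac{56}{3\delta}\,\alpha^Z(q+1)\le\frac{28}{\delta}\,\alpha^Z(q+1)$.
\item Bennett term: apply Bennett with $b=q$ and $v\le\lceil n/q\rceil q^2\le q(n+q)$, then the logarithmic inequality above. For each class and each sign this gives $\exp\bigl(-\frac{x}{2q}\log(1+\frac{x}{n+q})\bigr)$. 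For $x=\frac27\delta n$ this is exactly the exponent in the statement.
\end{itemize}
Two classes times two signs give the factor $4$.
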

\begin{proof}
	Let $1< q<n$ be fixed, and define $Y_k := Z_k\ind_{\{k\le n\}}$, $k\in\N$.
	We then apply Theorem~6.1 of Rio~\cite{rio2013inequalities} to the sequence of random variables $(Y_k)_{k\in\N}$, whose mixing coefficients do not exceed those of $(Z_k)$; in Rio's notation, $M=1$ and $M(n)=\sum_k\|Y_k\|_\infty\le n$.

	Note that $\|Y_k\|_{\infty}\le 1$ for every $k\ge1$, and $Y_k=0$ for $k>n$.
	For $v_q:=q(n+q)$ we have
	\[
	v_q \ge \Big\lceil\frac{n}{q}\Big\rceil q^2
	\ge \sum_{i>0}\E\big[(Y_{iq-q+1}+\cdots+Y_{iq})^2\big].
	\]
	Hence, by Theorem~6.1 of Rio~\cite{rio2013inequalities}, for any $\lambda\ge q$,
	\[
	\P\!\left(\sup_{1\le k\le n}|S_k|\ge \tfrac{7}{2}\lambda\right)
	\le 4\exp\!\left(-\frac{\lambda}{2q}\log\!\Big(1+\frac{\lambda}{n+q}\Big)\right)
	+ \frac{8n}{\lambda}\,\alpha^Z(q+1),
	\]
	where we have taken into account that in~\cite{rio2013inequalities}
	the definition of the mixing coefficient differs from ours by a factor of~$2$.
	Substituting $\lambda_n = \tfrac{2}{7}\delta n$ and assuming $\delta \ge \tfrac{7q}{2n}$,
	we obtain the desired inequality, which completes the proof.
\end{proof}

For later reference we denote the right-hand side of Lemma~\ref{lem:Rio61} with $n=K$ by
\begin{equation}\label{eq:Rbound}
	\Rb_{K,q}(\Delta,t)
	:=
	4 \exp\!\left(
	-\frac{\Delta K}{7q}
	\log\!\Bigl(1+\frac{(2/7)\Delta K}{K+q}\Bigr)
	\right)
	+ \frac{28}{\Delta}\,t,
	\qquad \Delta>0,\ t\ge0 .
\end{equation}
Note that $\Rb_{K,q}(\Delta,t)$ is nondecreasing in $t$ and nonincreasing in $\Delta$.

\begin{lemma}\label{lem:Hansen}
	Let $(X_n)_{n\in\N}$ be an $\R$-valued sequence of identically distributed variables satisfying $\E[X_0]=0$ and $\E|X_0|^r<\infty$ for some $r>1$, and define
	\[
	S_n = \sum_{k=1}^n X_k, \quad n\ge 1.
	\]
	Assume further that the strong mixing coefficients of $(X_n)_{n\in\N}$ satisfy $\alpha^X(n) = O(n^{-a})$ for some $a>0$, $a\neq 1$.
	Then there exists a constant $c>0$, depending only on $r$, $\E|X_0|^r$ and the sequence $(\alpha^X(n))_{n\in\N}$, such that for all $\delta>0$,
	\[
	\P\!\left(\frac{1}{n}|S_n|\ge \delta\right)
	\le c\,\delta^{-1}\,n^{-\frac{1}{2}\bigl(1-(1-a)_+\bigr)\left(1-\frac{1}{r}\right)}.
	\]
	Note that $1-(1-a)_+=\min(a,1)$. The same bound holds, with the same constant, if $S_n$ is replaced by $X_{m+1}+\cdots+X_{m+n}$ for any $m\in\N$, since the proof only uses the common marginal law and the mixing coefficients.
\end{lemma}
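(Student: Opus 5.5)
The plan is a truncation argument combined with a covariance bound for bounded strongly mixing variables, in the spirit of Hansen's $L^1$-law of large numbers~\cite{hansen2019weak}. By Markov's inequality it suffices to show
\[
\E\Bigl|\frac1n S_n\Bigr| \le c\, n^{-\frac12\min(a,1)\left(1-\frac1r\right)},
\]
since then $\P(|S_n|/n\ge\delta)\le \delta^{-1}\E|S_n/n|$. Fix a truncation level $M=M_n>0$, to be chosen later, and decompose
\[
X_k = \bigl(Y_k - \E Y_k\bigr) + \bigl(W_k - \E W_k\bigr),
\qquad
Y_k := X_k\ind_{\{|X_k|\le M\}},
\quad
W_k := X_k\ind_{\{|X_k|>M\}}.
\]
This decomposition uses $\E X_k=0$. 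Both $(Y_k)$ and $(W_k)$ are measurable functions of $X_k$ alone, so their mixing coefficients are dominated by $\alpha^X$, and they are identically distributed.

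\textbf{Tail part.} By identical distribution and the triangle inequality,
\[
\E\Bigl|\frac1n\sum_{k=1}^n (W_k-\E W_k)\Bigr| \le 2\E\bigl[|X_0|\ind_{\{|X_0|>M\}}\bigr] \le 2\E|X_0|^r\, M^{1-r}.
\]

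\textbf{Bounded part.} Use $\E|\cdot|\le(\E|\cdot|^2)^{1/2}$ and expand the variance:
\[
\var\Bigl(\sum_{k=1}^n Y_k\Bigr)\le \sum_{j,k\le n}\bigl|\cov(Y_j,Y_k)\bigr| .
\]
For bounded variables, the covariance inequality $|\cov(Y_j,Y_k)|\le 4M^2\alpha^X(|j-k|)$ applies (Ibragimov's inequality, which follows from the definition~\eqref{eq:dep_meas}). For the diagonal terms and the short lags one could also use the cruder bound $\E Y_0^2\le M^{2-r}\E|X_0|^r$ when $r<2$, but the mixing bound alone should suffice. This gives
\[
\var\Bigl(\sum_{k=1}^n Y_k\Bigr)\le 4M^2\, n\sum_{h=0}^{n-1}\alpha^X(h)\le C M^2 n\cdot
\begin{cases}1,& a>1,\\ n^{1-a},& a<1,\end{cases}
\]
since $\sum_{h<n}h^{-a}=O(n^{(1-a)_+})$ when $a\ne1$; this is exactly where $a\neq1$ is needed to avoid a logarithm. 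Hence
\[
\E\Bigl|\frac1n\sum_{k=1}^n (Y_k-\E Y_k)\Bigr|\le C' M\, n^{-\frac12\min(a,1)} .
\]

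\textbf{Balancing.} The total bound is $C'M n^{-\beta}+2\E|X_0|^r M^{1-r}$ with $\beta=\tfrac12\min(a,1)$. Choosing $M=n^{\beta/r}$ makes both terms of order $n^{-\beta(1-1/r)}$, which is the claimed exponent. The constant depends only on $r$, $\E|X_0|^r$ and the constants in $\alpha^X(n)=O(n^{-a})$. For the shifted sums $X_{m+1}+\cdots+X_{m+n}$, the same computation applies verbatim: only the marginal law and the lag-dependent covariance bounds were used, and $\alpha^X$ is a supremum over all starting times $k$.

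The main obstacle is the bounded-part variance estimate. The point is to track the covariance inequality with the correct constant under the present normalization of $\alpha$ and to split cleanly into the cases $a>1$ and $a<1$. Everything else is routine balancing.
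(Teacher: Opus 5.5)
Your proposal is correct and follows essentially the same route as the paper: Hansen-type truncation at level $M=n^{\min(a,1)/(2r)}$ (the paper's $n^{\tau}$), the covariance inequality for bounded strongly mixing variables combined with $\E|\cdot|\le(\E|\cdot|^2)^{1/2}$ and $\sum_{h<n}\alpha^X(h)=O(n^{(1-a)_+})$ for the bounded part, the bound $2\E|X_0|^rM^{1-r}$ for the tail part, balancing of the two exponents, and Markov's inequality. The only slip is harmless and is absorbed into your constant $C$: $\sum_{j,k\le n}\alpha^X(|j-k|)$ is bounded by $2n\sum_{h=0}^{n-1}\alpha^X(h)$, not by $n\sum_{h=0}^{n-1}\alpha^X(h)$.
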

\begin{proof}
Following the truncation technique employed by Hansen~\cite{hansen2019weak},
let $n\ge 1$ and $\tau>0$ be arbitrary and fixed.
Define the truncated random variables
\[
X_k' = X_k\ind_{\{|X_k|< n^\tau\}} - \E[X_k\ind_{\{|X_k|< n^\tau\}}],
\]
and set $X_k'' = X_k - X_k'$, $k\in\N$.
By the triangle inequality, we arrive at
\begin{equation}\label{eq:decomp}
	\E\!\left[\frac{1}{n}|S_n|\right]
	\le \E\!\left[\frac{1}{n}\left|\sum_{k=1}^n X_k'\right|\right]
	+ \frac{1}{n}\sum_{k=1}^n \E|X_k''|,
\end{equation}
where the second term can be estimated further by using
\begin{align*}
	\E|X_k''|
	&\le 2\,\E[|X_k|\ind_{\{|X_k|\ge n^\tau\}}]
	\le \frac{2\,\E[|X_0|^r]}{n^{\tau(r-1)}}.
\end{align*}

Furthermore, $X_k'$ satisfies the bound $|X_k'|\le 2n^\tau$, and its mixing coefficients obey
$\alpha^{X'}(m)\le \alpha^{X}(m)$ for all $m\in\N$.
Using the well-known covariance inequality for bounded strongly mixing variables
(cf.\ Theorem~A.5 in~\cite{hall2014martingale} or the proof of Lemma~A.1 in~\cite{lovasCLT}), we obtain
\[
|\E[X_k' X_l']|
= |\cov(X_k', X_l')|
\le 16\,n^{2\tau}\,\alpha^{X}(|k-l|),\quad 1\le k,l\le n.
\]

Thus, by Jensen's inequality, we can write
\[
\E^2\!\left|\sum_{k=1}^n X_k'\right|
\le \sum_{k,l=1}^n \E[X_k' X_l']
\le 16\,n^{2\tau}\sum_{k,l=1}^n \alpha^X(|k-l|)
\le 32\,n^{2\tau+1}\sum_{k=0}^{n-1}\alpha^X(k).
\]
Since $\alpha^X(n)=O(n^{-a})$, for $a\neq 1$ there exists a constant $c'>0$ such that
\[
\sum_{k=0}^{n-1}\alpha^X(k)\le c'\,n^{(1-a)_+}, \qquad n\ge 1.
\]
Substituting this estimate into~\eqref{eq:decomp}, we obtain
\begin{equation}\label{eq:final-bound}
	\E\!\left[\frac{1}{n}|S_n|\right]
	\le \sqrt{32c'}\,n^{\,\tau-\frac{1}{2}+\frac{(1-a)_+}{2}}
	+ 2\,\E[|X_0|^r]\,n^{-\tau(r-1)}.
\end{equation}
Finally, for
$\tau=\frac{1}{2r}\bigl(1-(1-a)_+\bigr),$
both exponents in~\eqref{eq:final-bound} are equal to $-\frac{1}{2}\bigl(1-(1-a)_+\bigr)\left(1-\frac{1}{r}\right)$, and we arrive at
\[
\E\!\left[\frac{1}{n}|S_n|\right]
\le \bigl(\sqrt{32c'}+2\,\E[|X_0|^r]\bigr)
n^{-\frac{1}{2}\bigl(1-(1-a)_+\bigr)\left(1-\frac{1}{r}\right)},
\]
from which the claimed inequality follows immediately by Markov's inequality.
\end{proof}

\begin{remark}
	In the case $a=1$, that is, when $\alpha^X(n) = O(n^{-1})$,
	the above argument can be carried out in the same way.
	The only difference is that $\sum_{k=0}^{n-1}\alpha^X(k)\le c'(1+\log n)$,
	and therefore the bound in~\eqref{eq:final-bound} takes the form
	\[
	\E\!\left[\frac{1}{n}|S_n|\right]
	\le \sqrt{32c'}\,n^{\,\tau-\frac{1}{2}}\sqrt{1+\log n}
	+ 2\,\E[|X_0|^r]\,n^{-\tau(r-1)}.
	\]
	Choosing $\tau=\frac{1}{2r}$ then yields
	\[
	\P\!\left(\frac{1}{n}|S_n|\ge \delta\right)
	\le c\,\delta^{-1}\,
	n^{-\frac{1}{2}\left(1-\frac{1}{r}\right)}\sqrt{1+\log n},
	\]
	where $c=\sqrt{32c'}+2\,\E[|X_0|^r]$.

	Since this case does not contribute significantly to the main results of the paper,
	we shall not discuss it further and will always assume that $a\neq 1$ in what follows.
\end{remark}

The proof of the following theorem follows the argument of Chen~\cite{chen2020short} for independent data, with Hoeffding's inequality replaced by Lemma~\ref{lem:Rio61}.

\begin{theorem}\label{thm:main}
	Let $(X_n)_{n\in\N}$ be a sequence of $\R$-valued identically distributed variables such that $\E[X_0]=0$ and $\E|X_0|^r<\infty$ for some $r>1$.
	Assume that the strong mixing coefficients satisfy $\alpha^X(n)=O(n^{-a})$ for some $a>0$, $a\neq 1$.
	Let $c>0$ be the constant of Lemma~\ref{lem:Hansen} (depending only on $r$, $\E|X_0|^r$ and $(\alpha^X(n))_{n\in\N}$), and set
	\begin{equation}\label{eq:Bsize}
	b= \frac{1}{2}[1-(1-a)_+](1-1/r),
	\qquad
	C=(2c)^{1/b}.
	\end{equation}
	Let $\delta>0$, and suppose that the block size $B$ in the block-average process~\eqref{eq:movAvg} satisfies $B > C\,\delta^{-1/b}$. Then
	\[
	\Delta
	:= \frac{1}{2}
	- \frac{c}{\delta}\,
	B^{-b}
	= \frac12\left(1-\Bigl(\frac{C\delta^{-1/b}}{B}\Bigr)^{b}\right)>0,
	\]
	and for every integer $1<q\le\tfrac{2}{7}K\Delta$ the median-of-means estimator $\hat{\mu}$ defined in~\eqref{eq:MoM} obeys
	\[
	\P(|\hat{\mu}|\ge \delta)
	\le \Rb_{K,q}\bigl(\Delta,\alpha^X(qB)\bigr)
	= 4 \exp\!\left(
	-\frac{\Delta K}{7q}
	\log\!\Bigl(1+\frac{(2/7)\Delta K}{K+q}\Bigr)
	\right)
	+ \frac{28}{\Delta}\,\alpha^X(qB).
	\]
\end{theorem}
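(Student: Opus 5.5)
The plan is to follow Chen's median counting argument, with Hoeffding's inequality replaced by Lemma~\ref{lem:Rio61}. By symmetry it suffices to bound $\P(\hat\mu\ge\delta)$; the event $\hat\mu\le-\delta$ is handled the same way by applying the argument to $(-X_n)$, whose marginals and mixing coefficients are the same. One point needs care: the final bound is a single $\Rb_{K,q}$ rather than twice it. To get this, I would work with the two-sided indicator $Z_k=\ind_{\{|\mu_k|\ge\delta\}}$. If $|\hat\mu|\ge\delta$, say $\hat\mu\ge\delta$, the definition of the median gives $\#\{k:\mu_k\ge\hat\mu\}\ge K/2$, so at least $K/2$ of the block means satisfy $\mu_k\ge\delta$. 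In either case
\[
\{|\hat\mu|\ge\delta\}\subset\Bigl\{\tfrac1K\textstyle\sum_{k=0}^{K-1}Z_k\ge \tfrac12\Bigr\}.
\]

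Next I bound the means of the $Z_k$. Each $\mu_k$ is $B^{-1}$ times a sum of $B$ consecutive $X$'s. By the shift-invariant form of Lemma~\ref{lem:Hansen}, $p_k:=\E Z_k=\P(|\mu_k|\ge\delta)\le c\delta^{-1}B^{-b}$. Since $\Delta=\tfrac12-c\delta^{-1}B^{-b}$, this gives $\tfrac1K\sum_k p_k\le\tfrac12-\Delta$. The condition $B>C\delta^{-1/b}$ with $C=(2c)^{1/b}$ is equivalent to $c\delta^{-1}B^{-b}<\tfrac12$, that is, $\Delta>0$; the displayed identity for $\Delta$ is immediate algebra. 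It follows that
\[
\Bigl\{\tfrac1K\textstyle\sum Z_k\ge\tfrac12\Bigr\}\subset\Bigl\{\tfrac1K\bigl|\textstyle\sum(Z_k-p_k)\bigr|\ge\Delta\Bigr\}.
\]

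The remaining step is to apply Lemma~\ref{lem:Rio61} with $n=K$ to the binary sequence $(Z_k)$. I must check two things. First, the hypothesis $\delta\ge 7q/(2n)$ of that lemma becomes $\Delta\ge 7q/(2K)$, which is exactly $q\le\tfrac27K\Delta$. The condition $q<K$ also holds, since $\Delta\le 1/2$ forces $q\le K/7$. Second, I must show $\alpha^Z(q+1)\le\alpha^X(qB)$. This transfer of mixing coefficients to blocks is the only step needing a real argument. Since $Z_k$ is a measurable function of $X_{kB+1},\dots,X_{kB+B}$, we have $\sigma(Z_j:j\le m)\subset\sigma(X_i:i\le mB+B)$ and $\sigma(Z_j:j\ge m+q+1)\subset\sigma(X_i:i\ge (m+q+1)B+1)$. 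The index gap is $(m+q+1)B+1-(mB+B)=qB+1\ge qB$, so by monotonicity of $\alpha^X$ we get $\alpha^Z(q+1)\le\alpha^X(qB)$. (The lemma's $Z$ is indexed from $1$ while ours starts at $0$; a harmless reindexing fixes this.)

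Finally, $\Rb_{K,q}$ is nondecreasing in $t$, so the bound from Lemma~\ref{lem:Rio61} is at most $\Rb_{K,q}(\Delta,\alpha^X(qB))$. I expect the main obstacle to be bookkeeping rather than depth. The points to get right are the use of the two-sided indicator, so that the constant stays at one copy of $\Rb$; the exact index gap in the mixing transfer; and the verification of the admissibility conditions of Lemma~\ref{lem:Rio61}.
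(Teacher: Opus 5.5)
Your proposal is correct and follows the paper's proof essentially step for step. The steps are: the two-sided indicator $Z_k=\ind_{\{|\mu_k|\ge\delta\}}$ with the median counting inclusion; the bound $\E Z_k\le c\,\delta^{-1}B^{-b}$ from the shift-invariant form of Lemma~\ref{lem:Hansen}; centering to a deviation of $\Delta$; and Lemma~\ref{lem:Rio61} with the block transfer $\alpha^Z(q+1)\le\alpha^X(qB+1)\le\alpha^X(qB)$. Your opening one-sided symmetry reduction is unnecessary and would cost a factor $2$, but you rightly drop it in favor of the two-sided indicator, exactly as the paper does.
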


\begin{proof}
	As in the classical i.i.d.\ case, the key observation is that the event $\{|\hat{\mu}|\ge \delta\}$ implies that at least half of the block averages $\mu_0,\ldots,\mu_{K-1}$ deviate from zero by at least~$\delta$. Indeed, if $\hat\mu\ge\delta$, $\mu_l\ge\hat\mu\ge\delta$ for at least $K/2$ indices $l$, and the case $\hat\mu\le-\delta$ is symmetric. Hence,
	\begin{align*}
		\P(|\hat{\mu}|\ge \delta)
		&\le
		\P\!\left(\sum_{l=0}^{K-1} \ind_{\{|\mu_l|\ge \delta\}} \ge \frac{K}{2}\right)
		\le
		\P\!\left(\frac{1}{K}\Big|\sum_{l=0}^{K-1} (Z_l-\E Z_l)\Big|
		\ge \frac{1}{2}-\frac{1}{K}\sum_{l=0}^{K-1}\E Z_l\right),
	\end{align*}
	where $Z_l=\ind_{\{|\mu_l|\ge \delta\}}$, $0\le l<K$.

	By Lemma~\ref{lem:Hansen}, applied to the $l$-th block, there exists a constant $c>0$, depending only on $r$, $\E|X_0|^r$ and $(\alpha^X(n))_{n\in\N}$, such that for every $0\le l<K$,
	\[
	\E Z_l = \P(|\mu_l|\ge \delta)
	\le c\,\delta^{-1} B^{-b}.
	\]

	Since
	$B > C\,\delta^{-1/b},$
	it follows that
	\[
	0<\Delta = \frac{1}{2}
	- \frac{c}{\delta}\,
	B^{-b} \le \frac{1}{2}-\frac{1}{K}\sum_{l=0}^{K-1}\E Z_l,
	\]
	and therefore
	\[
	\P(|\hat{\mu}|\ge \delta)
	\le
	\P\!\left(
	\frac{1}{K}\Big|\sum_{l=0}^{K-1}(Z_l-\E Z_l)\Big|
	\ge \Delta
	\right).
	\]

	Applying Lemma~\ref{lem:Rio61} to the $\{0,1\}$-valued sequence $(Z_l)_{0\le l<K}$ yields, for every integer $1<q\le \tfrac{2}{7}K\Delta$,
	\[
	\P\!\left(
	\frac{1}{K}\Big|\sum_{l=0}^{K-1}(Z_l-\E Z_l)\Big|
	\ge \Delta
	\right)
	\le
	4 \exp\!\left(
	-\frac{\Delta K}{7q}
	\log\!\Bigl(1+\frac{(2/7)\Delta K}{K+q}\Bigr)
	\right)
	+ \frac{28}{\Delta}\,\alpha^Z(q+1).
	\]
	(The condition $q<K$ of Lemma~\ref{lem:Rio61} holds since $\Delta\le 1/2$.)

	Finally, since $Z_l$ is a function of the $l$-th block, the $\alpha$-mixing coefficients of $(X_n)_{n\in\N}$ and $(Z_l)_{0\le l<K}$ satisfy
	\[
	\alpha^Z(q)\le \alpha^X\bigl((q-1)B+1\bigr)\le\alpha^X\bigl((q-1)B\bigr), \quad q\ge1,
	\]
	and the desired inequality follows.
\end{proof}

The following corollary describes, for a fixed tolerance level $\delta>0$,
how the deviation probability $\P(|\hat{\mu}|\ge\delta)$ decays as a function of the total sample size $n=K B$,
depending on whether the mixing coefficients $\alpha^X(n)$ decrease at a polynomial or exponential rate.

\begin{corollary}\label{cor:specmix}
	Assume that the sequence satisfies $\alpha^X(n) = O(n^{-a})$ with $a>1$.
	Then, as follows from the proof of Lemma~\ref{lem:Hansen}, one can take $C = (2c)^{1/b}$, where
	$c = \sqrt{32 c'} + 2\E|X_0|^r$ and $c' = \sum_{k=0}^{\infty} \alpha^X(k) < \infty$, while
	$b = \tfrac{1}{2}\!\left(1 - \tfrac{1}{r}\right)$.

	Since $n = K B$, for any fixed $\delta > 0$ and any fixed block size $B > C\delta^{-1/b}$, the following bounds hold as $K\to\infty$:
	\begin{itemize}
		\item Choosing $q = \lfloor c_\Delta\, K \log^{-1}(K B) \rfloor$ with $c_\Delta:=\frac{\Delta\log(1+\Delta/7)}{7a}$, we get
		\[
		\P(|\hat{\mu}|\ge \delta) = O\!\left(n^{-a}\log^a n\right).
		\]
		Indeed, since $q\le K$ we have $\log\bigl(1+\frac{(2/7)\Delta K}{K+q}\bigr)\ge\log(1+\Delta/7)$, hence the exponential term of $\Rb_{K,q}$ is at most $4n^{-a}$, while $\alpha^X(qB)=O\bigl((n/\log n)^{-a}\bigr)$.

		\item If, in addition, $\alpha^X(n) = O(\rho^n)$ for some $\rho \in (0,1)$, then for
		$q = \lfloor \sqrt{K/B} \rfloor$ one obtains
		\[
		\P(|\hat{\mu}|\ge \delta) = O\!\left(\kappa^{\sqrt{n}}\right)
		\]
		for some $\kappa \in (0,1)$.
	\end{itemize}
\end{corollary}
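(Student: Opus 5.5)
The plan is to plug the stated choices of $q$ into Theorem~\ref{thm:main} and bound the two terms of $\Rb_{K,q}(\Delta,\alpha^X(qB))$ separately. $\delta$ and $B$ are held fixed, so $\Delta>0$ is a fixed constant and only $K\to\infty$ (equivalently $n=KB\to\infty$) varies. The constants: since $a>1$ we have $(1-a)_+=0$, so $b=\frac12(1-1/r)$. In the proof of Lemma~\ref{lem:Hansen} one may take $c'=\sum_{k\ge0}\alpha^X(k)<\infty$, because $\sum_{k<n}\alpha^X(k)\le c' n^0$. This gives $c=\sqrt{32c'}+2\E|X_0|^r$ directly from the last display of that proof.

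First I check admissibility. Theorem~\ref{thm:main} requires $1<q\le\frac27 K\Delta$. For $q=\lfloor c_\Delta K/\log n\rfloor$ we have $q\to\infty$, so $q>1$ for large $K$. Also $c_\Delta/\log n\le \frac27\Delta$ eventually, so the upper constraint holds. For $q=\lfloor\sqrt{K/B}\rfloor$ both constraints are likewise clear for large $K$.

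\emph{Polynomial case.} Since $q\le K$, we have $\frac{(2/7)\Delta K}{K+q}\ge \Delta/7$. The exponential term is therefore at most
\[
4\exp\Bigl(-\tfrac{\Delta K}{7q}\log(1+\Delta/7)\Bigr).
\]
Using $q\le c_\Delta K/\log n$, this is at most $4\exp(-a\log n)=4n^{-a}$, which is how $c_\Delta$ was chosen. For the mixing term, $qB\ge c_\Delta n/\log n - B$, which is at least $\frac12 c_\Delta n/\log n$ for large $n$. Hence
\[
\tfrac{28}{\Delta}\alpha^X(qB)=O\bigl((n/\log n)^{-a}\bigr)=O(n^{-a}\log^a n).
\]
Summing the two terms gives the claim.

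\emph{Geometric case.} With $q\asymp\sqrt{K/B}$, the exponential term is at most
\[
4\exp\bigl(-\tfrac{\Delta\log(1+\Delta/7)}{7}\,K/q\bigr),
\]
and $K/q\ge\sqrt{KB}=\sqrt n$. The mixing term is $O(\rho^{qB})$. Since $qB\ge B(\sqrt{K/B}-1)=\sqrt n-B$, this is $O(\rho^{\sqrt n})$. Taking $\kappa=\max\bigl(\rho,\exp(-\Delta\log(1+\Delta/7)/7)\bigr)<1$ finishes the proof.

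The only mildly delicate step is the floor bookkeeping: I need $qB$ to be comparable to $n/\log n$, respectively $\sqrt n$, up to constants that can be absorbed into the $O$. I also need the condition $q\le\frac27K\Delta$ to hold eventually, which is used both for applying Theorem~\ref{thm:main} and for the lower bound on the logarithm.
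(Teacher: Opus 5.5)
Your proposal is correct and follows the paper's route: apply Theorem~\ref{thm:main}, use $q\le K$ to bound the logarithm below by $\log(1+\Delta/7)$ so that the choice of $c_\Delta$ makes the exponential term at most $4n^{-a}$, and control $\alpha^X(qB)$ through $qB\gtrsim n/\log n$ (resp.\ $qB\ge\sqrt n-B$). Your admissibility check and the explicit geometric-case estimates ($K/q\ge\sqrt n$, $\kappa=\max(\rho,e^{-\Delta\log(1+\Delta/7)/7})$) fill in details that the paper leaves implicit.
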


\subsection{The median-of-means process}

To extend the applicability of the above results, we consider the \emph{median-of-means process}. Let $(\Y,\A)$ be a measurable space, and let $(Y_n)_{n\in\N}$ be an identically distributed (not necessarily independent) sequence of $\Y$-valued random variables. Let $\C$ be a class of $\A$-measurable functions $f:\Y\to\R$ such that
\[
\E\big[|f(Y_0)|\big]<\infty, \quad f\in\C.
\]

For a fixed $f\in\C$, define
\[
X_n^f := f(Y_n)-\E[f(Y_n)], \quad n\in\N.
\]
Fix an integer block size $B\ge 1$, and define the block means $\mu_k(f)$, $k\in\N$, analogously to \eqref{eq:movAvg}. If the number of blocks is $K\ge 1$ (so that $n=K\cdot B$), we define, in analogy with \eqref{eq:MoM}, the median-of-means estimator
\[
\hat{\mu}(f):=\med\big(\mu_0(f),\ldots,\mu_{K-1}(f)\big).
\]
The mapping $f\mapsto \hat{\mu}(f)$ defines the \emph{median-of-means process} over $\C$.

In the remainder of this section, we extend known results for the median-of-means process from independent to $\alpha$-mixing sequences. Our goal is to derive non-asymptotic bounds for the $L^\infty$ loss over $\C$, that is, $\sup_{f\in\C} \big|\hat{\mu}(f)\big|$, under minimal assumptions. Teng et al. \cite{math11173730} bounded $
\P\!\left(\sup_{f\in\C} \big|\hat{\mu}(f)\big|\ge \delta\right)$, $\delta>0$, by tail bounds for the binomial distribution, which improve on the bounds based on Hoeffding's inequality. This requires independent blocks; Instead, as throughout the paper, we follow the approach of \cite{chen2020short} and rely on Lemma~\ref{lem:Rio61}.

\paragraph{Measurability.}
If $\C$ is uncountable, the supremum $\sup_{f\in\C}|\mu_l(f)|$ of uncountably many random variables need not be measurable. Then the indicators $Z_l=\ind_{\{\sup_{f\in\C}|\mu_l(f)|\ge\delta\}}$, which replace the Bernoulli variables in the proof of Theorem~\ref{thm:main}; their expectations and mixing coefficients are then undefined, and Lemma~\ref{lem:Rio61} does not apply. The measurability of such indicators is taken for granted in the derivation of the uniform bound in~\cite{chen2020short}, where Hoeffding's inequality is applied to them, and in the proof of Proposition~1 of~\cite{humbert2022robust}. In empirical process theory this difficulty is usually removed by a \emph{pointwise measurability} assumption~\cite[Example~2.3.4]{vandervaart1996weak}; see also the permissible classes of Pollard~\cite{pollard1984convergence} and the image admissible Suslin classes of Dudley~\cite{dudley1999uniform}. We need the following variant, which also controls the centering constants:
\begin{equation}\label{eq:S}
	\tag{S}
	\begin{minipage}{0.84\textwidth}
	there is a countable subclass $\C_0\subseteq\C$ such that for every $f\in\C$ there are $g_1,g_2,\ldots\in\C_0$ with
	$g_m(y)\to f(y)$ for every $y\in\Y$ and $\E[g_m(Y_0)]\to\E[f(Y_0)]$ as $m\to\infty$.
	\end{minipage}
\end{equation}
Condition~\eqref{eq:S} holds in each of the following situations:
\begin{enumerate}[(a)]
	\item $\C$ is countable;
	\item $\C$ contains a countable subset $\C_0$ which is dense with respect to the uniform norm $\|f\|_\infty=\sup_{y\in\Y}|f(y)|$ (the convergence of the means follows from $|\E g(Y_0)-\E f(Y_0)|\le\|g-f\|_\infty$);
	\item $\C=\{f_\theta:\theta\in\Theta\}$, where $\Theta$ is a separable metric space, $\theta\mapsto f_\theta(y)$ is continuous for every $y\in\Y$ and $\theta\mapsto\E f_\theta(Y_0)$ is continuous; the latter holds, e.g.\ by dominated convergence, if $\sup_\theta|f_\theta|\le F$ with $\E F(Y_0)<\infty$.
\end{enumerate}
All classes used in Sections~\ref{sec:regression}--\ref{sec:VaR} are of type (c).

\begin{theorem}\label{thm:MoMproc}
	Under the above assumptions, suppose that $\C$ satisfies~\eqref{eq:S}, and that for some $\delta>0$ and block size $B\ge 1$ there is a uniform upper bound $\eps_{\delta,B}$ such that
	\[
	\sup_{l\in\N}
	\P\!\left(
	\sup_{f\in\C} \big|\mu_l(f)\big|\ge \delta
	\right)
	\le \eps_{\delta,B},
	\]
	the events being measurable by Lemma~\ref{lem:meas} below. If
	\[
	0<\Delta:=\tfrac12-\eps_{\delta,B},
	\]
	then for every integer $1<q\le \tfrac{2}{7}K\Delta$,
	\[
	\P\!\left(\sup_{f\in\C} \big|\widehat{\mu}(f)\big|\ge \delta\right)
	\le
	\Rb_{K,q}\bigl(\Delta,\alpha^{Y}(qB)\bigr)
	=
	4 \exp\!\left(
	-\frac{\Delta K}{7q}
	\log\!\Bigl(1+\frac{(2/7)\Delta K}{K+q}\Bigr)
	\right)
	+\frac{28}{\Delta}\,\alpha^{Y}(qB).
	\]
	If the median is chosen by a rule that is continuous in the block means (e.g.\ the lower median), the event on the left-hand side is measurable (Lemma~\ref{lem:meas}); for an arbitrary choice of the median, $\P$ on the left-hand side is to be understood as outer probability.
\end{theorem}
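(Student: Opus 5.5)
The plan is to repeat the proof of Theorem~\ref{thm:main}, with the single-mean indicators replaced by indicators of uniform deviations over the class. For $0\le l<K$ set
\[
Z_l:=\ind_{\{\sup_{f\in\C}|\mu_l(f)|\ge\delta\}}.
\]
By Lemma~\ref{lem:meas}, condition~\eqref{eq:S} makes each $Z_l$ a genuine random variable. Concretely, \eqref{eq:S} together with pointwise convergence $g_m\to f$ and convergence of the centering constants gives $\mu_l(g_m)\to\mu_l(f)$ pointwise on $\Omega$. Hence $\sup_{f\in\C}|\mu_l(f)|=\sup_{g\in\C_0}|\mu_l(g)|$, a countable supremum.

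Moreover, $Z_l$ is a measurable function of $(Y_{lB+1},\ldots,Y_{lB+B})$ alone, since the centerings $\E f(Y_0)$ are deterministic. The block-transfer argument from the end of the proof of Theorem~\ref{thm:main} therefore applies verbatim and gives
\[
\alpha^Z(q+1)\le\alpha^Y(qB+1)\le\alpha^Y(qB).
\]

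The deterministic step is the median counting argument. Suppose $|\hat\mu(f)|\ge\delta$ for some $f$, say $\hat\mu(f)\ge\delta$. By the definition of a median, at least $K/2$ indices $l$ satisfy $\mu_l(f)\ge\hat\mu(f)\ge\delta$, so each such $l$ has $Z_l=1$. The case $\hat\mu(f)\le-\delta$ is symmetric. This works for any $f$ and any choice of median, so
\[
\Bigl\{\sup_{f\in\C}|\hat\mu(f)|\ge\delta\Bigr\}\subseteq\Bigl\{\textstyle\sum_{l<K}Z_l\ge K/2\Bigr\}.
\]
The right-hand event is measurable. This inclusion bounds the outer probability for an arbitrary median rule. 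For a continuous rule, the left-hand event is itself measurable by Lemma~\ref{lem:meas}: $\hat\mu(g)\to\hat\mu(f)$ along the approximating sequence, so its supremum also reduces to a supremum over $\C_0$.

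Next comes the centering step. The hypothesis gives $\E Z_l\le\eps_{\delta,B}$ for all $l$, so
\[
\tfrac12-\tfrac1K\textstyle\sum_l\E Z_l\ge\Delta>0,
\]
and therefore
\[
\P\Bigl(\textstyle\sum_l Z_l\ge K/2\Bigr)\le\P\Bigl(\tfrac1K\bigl|\textstyle\sum_l(Z_l-\E Z_l)\bigr|\ge\Delta\Bigr).
\]
Apply Lemma~\ref{lem:Rio61} with $n=K$. Its requirement $\Delta\ge 7q/(2K)$ is exactly $q\le\tfrac27K\Delta$, and $q<K$ holds because $\Delta\le\tfrac12$. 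Using $\alpha^Z(q+1)\le\alpha^Y(qB)$ and the monotonicity of $\Rb_{K,q}$ in $t$ then yields the bound $\Rb_{K,q}(\Delta,\alpha^Y(qB))$.

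The only genuinely new obstacle relative to Theorem~\ref{thm:main} is measurability. It matters in two places: it is needed for $Z_l$ to have an expectation and mixing coefficients, and it determines how the left-hand probability is to be read. Both points are delegated to Lemma~\ref{lem:meas} as stated. The point to check there is that convergence of the centering constants, which \eqref{eq:S} provides, is what turns the countable reduction for $f(Y_n)$ into one for the centered block means.
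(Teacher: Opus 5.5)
Your architecture matches the paper's proof: indicators $Z_l$ of uniform block deviations, measurability via Lemma~\ref{lem:meas}, centering, the transfer $\alpha^Z(q+1)\le\alpha^Y(qB+1)$, and Lemma~\ref{lem:Rio61}. Those steps are fine. The gap is in the inclusion
\[
\Bigl\{\sup_{f\in\C}|\hat\mu(f)|\ge\delta\Bigr\}\subseteq\Bigl\{\textstyle\sum_{l<K}Z_l\ge K/2\Bigr\}.
\]
Your counting argument starts from \emph{$|\hat\mu(f)|\ge\delta$ for some $f$}, so it only proves $\bigcup_{f\in\C}\{|\hat\mu(f)|\ge\delta\}\subseteq\{\sum_lZ_l\ge K/2\}$. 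The event in the theorem is larger. It also contains outcomes where $|\hat\mu(f)|<\delta$ for every $f$ while the supremum equals $\delta$.

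This does happen. Take $\C=\{y\mapsto\theta y:\theta\in(0,1)\}$ with integrable real $Y_n$; it satisfies \eqref{eq:S} by (c). The lower median is positively homogeneous, so $\hat\mu(f_\theta)=\theta\,\bar m$, where $\bar m$ is the lower median of the centered plain block means. On $\{|\bar m|=\delta\}$ the supremum is $\delta$ but is never attained, and your argument says nothing there. So your closing claim that measurability is the only new obstacle relative to Theorem~\ref{thm:main} is not right: non-attainment of the supremum is a second one, and the paper explicitly addresses it.

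The repair, which is what the paper does, is to argue contrapositively and use finiteness of $K$ to obtain a uniform margin. Put $s_l:=\sup_{f\in\C}|\mu_l(f)|$ and suppose $\sum_lZ_l<K/2$.
\begin{enumerate}[(1)]
\item The set $L:=\{l:s_l<\delta\}$ is nonempty and finite, so $\delta':=\max_{l\in L}s_l<\delta$.
\item For every $f$, the blocks with $|\mu_l(f)|>\delta'$ lie outside $L$, so there are fewer than $K/2$ of them.
\item Use the strict form of the counting step: a median $m$ with $|m|>\delta'$ forces at least $K/2$ indices with $|x_l|>\delta'$. Hence $|\hat\mu(f)|\le\delta'$ for all $f$.
\item Therefore $\sup_f|\hat\mu(f)|\le\delta'<\delta$.
\end{enumerate}
Equivalently, you could run your argument with strict inequality at every level $\delta''<\delta$. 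That shows the $\lceil K/2\rceil$-th largest of $s_0,\ldots,s_{K-1}$ exceeds every $\delta''<\delta$, hence is at least $\delta$.

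With this inclusion in place, the rest of your proof goes through as written. This includes the outer-probability reading for an arbitrary median rule, since the right-hand event is measurable.
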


Note that the mixing coefficients of the underlying sequence $(Y_n)$ appear on the right-hand side; those of $(X^f_n)$ depend on $f$ and are all dominated by $\alpha^Y$.

\begin{lemma}[Measurability]\label{lem:meas}
	Assume~\eqref{eq:S}. Then:
	\begin{enumerate}[(i)]
		\item for every $l\in\N$ and $\omega\in\Omega$,
		$\sup_{f\in\C}|\mu_l(f)(\omega)|=\sup_{g\in\C_0}|\mu_l(g)(\omega)|$; in particular $\sup_{f\in\C}|\mu_l(f)|$ is a $[0,\infty]$-valued random variable which is measurable with respect to $\sigma(Y_{lB+1},\ldots,Y_{lB+B})$;
		\item if $\hat\mu(f)=\varphi\bigl(\mu_0(f),\ldots,\mu_{K-1}(f)\bigr)$ for a continuous median rule $\varphi:\R^K\to\R$ (e.g.\ the lower median), then $\sup_{f\in\C}|\hat\mu(f)|=\sup_{g\in\C_0}|\hat\mu(g)|$ is a random variable.
	\end{enumerate}
\end{lemma}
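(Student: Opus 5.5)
The plan is to prove (i) by a pointwise approximation argument. Then (ii) follows from (i) by continuity of the median rule, and measurability from the countability of $\C_0$.

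For (i), fix $l\in\N$ and $\omega\in\Omega$. Since $\C_0\subseteq\C$, the inequality $\sup_{g\in\C_0}|\mu_l(g)(\omega)|\le\sup_{f\in\C}|\mu_l(f)(\omega)|$ is trivial. For the reverse inequality, fix $f\in\C$ and pick $g_m\in\C_0$ as in~\eqref{eq:S}. By definition,
\[
\mu_l(g_m)(\omega)=\frac1B\sum_{j=1}^B g_m\bigl(Y_{lB+j}(\omega)\bigr)-\E[g_m(Y_0)],
\]
using that the $Y_n$ are identically distributed, so the centering constant is $\E[g_m(Y_0)]$ for every index. This is a finite sum. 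Pointwise convergence $g_m(y)\to f(y)$ at the $B$ points $y=Y_{lB+j}(\omega)$, together with convergence of the means, gives $\mu_l(g_m)(\omega)\to\mu_l(f)(\omega)$. Hence $|\mu_l(f)(\omega)|=\lim_m|\mu_l(g_m)(\omega)|\le\sup_{g\in\C_0}|\mu_l(g)(\omega)|$, and taking the supremum over $f$ yields equality. Each $\mu_l(g)$ is a Borel function of $(Y_{lB+1},\ldots,Y_{lB+B})$, because $g$ is $\A$-measurable. A countable supremum of $[0,\infty]$-valued measurable functions is measurable, which gives the $\sigma(Y_{lB+1},\ldots,Y_{lB+B})$-measurability.

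For (ii), the same sequence $g_m$ gives $\mu_k(g_m)(\omega)\to\mu_k(f)(\omega)$ simultaneously for all $k=0,\ldots,K-1$, since there are finitely many blocks. Continuity of $\varphi:\R^K\to\R$ then gives $\hat\mu(g_m)(\omega)\to\hat\mu(f)(\omega)$, and the same two-sided supremum argument yields $\sup_{\C}|\hat\mu|=\sup_{\C_0}|\hat\mu|$. Measurability follows because each $\hat\mu(g)=\varphi(\mu_0(g),\ldots,\mu_{K-1}(g))$ is a continuous, hence Borel, function of measurable variables, and the supremum is over a countable set.

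The only step needing care is the claim that the lower median is continuous on $\R^K$. I would verify it by noting that the lower median equals the order statistic $x_{(\lceil K/2\rceil)}$. Order statistics are continuous, indeed $1$-Lipschitz in sup-norm, since $x_{(j)}=\max_{|S|=j}\min_{i\in S}x_i$. This also confirms that the lower median satisfies the two counting conditions defining a median. Beyond this point the argument is routine; the main thing to get right is that the convergence of the centering constants in~\eqref{eq:S} is exactly what the approximation step needs.
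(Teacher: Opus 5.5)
Your proposal is correct and follows the paper's proof essentially step for step. It uses the same pointwise approximation via~\eqref{eq:S}, including convergence of the centering constants, the same countable-supremum argument for measurability with respect to $\sigma(Y_{lB+1},\ldots,Y_{lB+B})$, and continuity of $\varphi$ for (ii). The one difference is how you check that the lower median is continuous. You use order statistics, whereas the paper uses monotonicity plus translation equivariance, which gives $|\varphi(x)-\varphi(y)|\le\max_l|x_l-y_l|$. There is a small indexing slip there: for the $j$-th smallest value the correct formula is $x_{(j)}=\min_{|S|=j}\max_{i\in S}x_i$. Your expression $\max_{|S|=j}\min_{i\in S}x_i$ is the $j$-th largest value, which gives the upper median when $K$ is even. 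This does not affect the argument, because any such max--min expression is $1$-Lipschitz in the sup-norm.
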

\begin{proof}
	(i) The inequality ``$\ge$'' is trivial. Fix $f\in\C$, $\omega\in\Omega$, and let $g_m\in\C_0$ be as in~\eqref{eq:S}. Since $\mu_l(g_m)(\omega)$ is an average of $B$ values $g_m(Y_j(\omega))$ minus $\E g_m(Y_0)$, it converges to $\mu_l(f)(\omega)$. Hence $|\mu_l(f)(\omega)|=\lim_m|\mu_l(g_m)(\omega)|\le\sup_{g\in\C_0}|\mu_l(g)(\omega)|$. A countable supremum of functions measurable with respect to $\sigma(Y_{lB+1},\ldots,Y_{lB+B})$ is measurable with respect to the same $\sigma$-algebra.

	(ii) Since $\varphi$ is continuous, $\hat\mu(g_m)(\omega)\to\hat\mu(f)(\omega)$, and the argument of (i) applies. (The lower median $\varphi(x)$ of $x\in\R^K$ is nondecreasing in each coordinate and translation equivariant, $\varphi(x+t\mathbf 1)=\varphi(x)+t$; hence $|\varphi(x)-\varphi(y)|\le\max_l|x_l-y_l|$.)
\end{proof}

\begin{proof}[Proof of Theorem~\ref{thm:MoMproc}]
	\emph{Step 1 (counting).} If $m$ is a median of $x_0,\ldots,x_{K-1}$ and $|m|\ge\delta$ (resp.\ $|m|>\delta$), then $\#\{l:|x_l|\ge\delta\}\ge K/2$ (resp.\ $\#\{l:|x_l|>\delta\}\ge K/2$): if $m\ge\delta$, for at least $K/2$ indices $l$,
	$x_l\ge\delta$, and the case $m\le-\delta$ is symmetric. The bound $K/2$ cannot be improved to a strict inequality: for $K=2$, $x=(0,2\delta)$ and the midpoint median $m=\delta$, only one block deviates.

	\emph{Step 2 (the supremum).} Let $s_l:=\sup_{f\in\C}|\mu_l(f)|\in[0,\infty]$ and $Z_l:=\ind_{\{s_l\ge\delta\}}$. We claim that
	\begin{equation}\label{eq:inclusion}
		\Bigl\{\sup_{f\in\C}|\hat\mu(f)|\ge\delta\Bigr\}
		\subseteq
		\Bigl\{\sum_{l=0}^{K-1}Z_l\ge \frac K2\Bigr\}.
	\end{equation}
	Suppose that $\sum_lZ_l<K/2$. Then $L:=\{l:s_l<\delta\}$ is nonempty and $\delta':=\max_{l\in L}s_l<\delta$. For every $f\in\C$ we have $\{l:|\mu_l(f)|>\delta'\}\subseteq\{l:s_l\ge\delta\}$, a set of fewer than $K/2$ elements, so by the strict version of Step~1, $|\hat\mu(f)|\le\delta'$. Hence $\sup_f|\hat\mu(f)|\le\delta'<\delta$. Note that this argument does not require the supremum to be attained.

	\emph{Step 3 (centering).} By Lemma~\ref{lem:meas}, $Z_l$ is a random variable with $\E Z_l\le\eps_{\delta,B}$, hence $\frac12-\frac1K\sum_l\E Z_l\ge\Delta$ and
	\[
	\Bigl\{\sum_{l=0}^{K-1}Z_l\ge\frac K2\Bigr\}
	\subseteq
	\Bigl\{\frac1K\Big|\sum_{l=0}^{K-1}(Z_l-\E Z_l)\Big|\ge\Delta\Bigr\}.
	\]

	\emph{Step 4 (mixing of the indicators).} By Lemma~\ref{lem:meas}, $Z_l$ is $\sigma(Y_{lB+1},\ldots,Y_{lB+B})$-measurable. Consequently $\sigma(Z_j:j\le k)\subseteq\sigma(Y_i:i\le(k+1)B)$ and $\sigma(Z_j:j\ge k+q+1)\subseteq\sigma(Y_i:i\ge(k+q+1)B+1)$; the gap between the two index sets is $qB+1$, whence
	\[
	\alpha^Z(q+1)\le\alpha^Y(qB+1)\le\alpha^Y(qB).
	\]

	\emph{Step 5.} Lemma~\ref{lem:Rio61}, applied to $(Z_{k-1})_{k\ge1}$ with $n=K$ and $\delta=\Delta$ is applicable, because $q\le\frac27K\Delta$ is equivalent to $\Delta\ge\frac{7q}{2K}$, and $q<K$ since $\Delta\le\frac12$. Combining the previous steps with the monotonicity of $\Rb_{K,q}$ in its second argument completes the proof.
\end{proof}

\begin{remark}[Decay rates]\label{rem:decrate}
	The right-hand side of Theorem~\ref{thm:MoMproc} depends on the class $\C$ only through $\Delta$. If $\eps_{\delta,B}\le\frac14$, then $\Delta\ge\frac14$ and the choices of $q$ in Corollary~\ref{cor:specmix} apply verbatim: for fixed $\delta$ and $B$, $\P(\sup_f|\hat\mu(f)|\ge\delta)=O(n^{-a}\log^an)$ if $\alpha^Y(n)=O(n^{-a})$, and $=O(\kappa^{\sqrt n})$ if $\alpha^Y(n)=O(\rho^n)$.

	If one prefers to avoid Lemma~\ref{lem:Rio61} altogether, Chebyshev's inequality applied to $\sum_l Z_l$, together with $|\cov(Z_i,Z_j)|\le\alpha^Z(|i-j|)$ and Step~4, gives the elementary polynomial bound
	\[
	\P\Bigl(\sup_{f\in\C}|\hat\mu(f)|\ge\delta\Bigr)
	\le
	\frac{\frac14+2\sum_{k=0}^{K-1}\alpha^Y(kB+1)}{K\Delta^2}.
	\]
\end{remark}

\begin{remark}\label{rem:VC}
	Condition~\eqref{eq:S} cannot be dropped from Lemma~\ref{lem:meas}, and it is not implied
	by the Vapnik--Chervonenkis property of $\C$.
	Let $A\subset\R$ be a non-Borel set and consider the class $\C=\{\ind_{\{a\}}:a\in A\}$ of singleton indicators, whose VC dimension is~$1$. For every $y\in\R$, $\sup_{f\in\C}f(y)=\ind_A(y)$, which is not Borel measurable. 
	
	Moreover, density in $L^\infty(\P_{Y_0})$ (i.e.\ with respect to the essential supremum) is not sufficient either: if $\P_{Y_0}$ is non-atomic, every element of $\C\cup\{0\}$ equals $0$ almost surely, so $\{0\}$ is dense in $\C\cup\{0\}$ in $L^\infty(\P_{Y_0})$, yet $\sup_{f\in\C\cup\{0\}}f(Y_0)=\ind_A(Y_0)$ is not a random variable when, for instace, $\Omega=[0,1]$ with the Borel $\sigma$-algebra and the Lebesgue measure, $Y_0(\omega)=\omega$, and $A\subset[0,1]$ is not Borel. The uniform norm in condition (b) above is therefore essential. 
	
	On the other hand, \eqref{eq:S} is used in the proof of Theorem~\ref{thm:MoMproc} only to make the indicators $Z_l$ measurable: if they are replaced by $\sigma(Y_{lB+1},\ldots,Y_{lB+B})$-measurable indicator majorants $Z_l'\ge Z_l$ with $\E Z_l'\le\eps_{\delta,B}$ (e.g.\ indicators of measurable hulls), the same proof bounds the outer probability of the event in Theorem~\ref{thm:MoMproc}.
\end{remark}

\begin{corollary}[Adversarial contamination]\label{cor:contamination}
	Suppose that the observations belonging to at most $n_o$ of the $K$ blocks are replaced by arbitrary values (possibly depending on the whole sample, and not necessarily measurable), and let $\hat\mu^{o}(f)$ denote the MoM process computed from the corrupted sample. If
	\[
	\Delta_o:=\tfrac12-\tfrac{n_o}{K}-\eps_{\delta,B}>0,
	\]
	then, with $\P^*$ denoting outer probability,
	\[
	\P^*\Bigl(\sup_{f\in\C}|\hat\mu^o(f)|\ge\delta\Bigr)\le\Rb_{K,q}\bigl(\Delta_o,\alpha^Y(qB)\bigr),\qquad\text{for every integer }1<q\le\tfrac27K\Delta_o.
	\]
	In particular, $n_o$ arbitrary outliers (which affect at most $n_o$ blocks) can be tolerated as long as $n_o<K(\frac12-\eps_{\delta,B})$.
\end{corollary}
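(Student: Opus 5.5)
The plan is to rerun the proof of Theorem~\ref{thm:MoMproc} with the same indicators $Z_l$, all built from the \emph{uncorrupted} sample, and to absorb the corrupted blocks into the counting step. Let $O\subseteq\{0,\ldots,K-1\}$ be the set of corrupted blocks. It may be random, sample-dependent and even non-measurable, but $|O|\le n_o$ always. Let $\mu_l^o(f)$ be the block means of the corrupted sample. For $l\notin O$ we have $\mu_l^o(f)=\mu_l(f)$, since the centering constants $\E f(Y_0)$ are unaffected. With $s_l$ and $Z_l$ as in Step~2, I will show the inclusion
\[
\Bigl\{\sup_{f\in\C}|\hat\mu^o(f)|\ge\delta\Bigr\}\subseteq\Bigl\{\sum_{l=0}^{K-1}Z_l\ge \frac K2-n_o\Bigr\}.
\]
The right-hand event is measurable by Lemma~\ref{lem:meas}, so this inclusion immediately bounds the outer probability of the left-hand side.

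To prove the inclusion, suppose $\sum_l Z_l<K/2-n_o$. Set $L:=\{l\notin O: s_l<\delta\}$. Its complement consists of at most $n_o$ corrupted indices plus fewer than $K/2-n_o$ indices with $s_l\ge\delta$. Hence $|L^c|<K/2$, so $L$ is nonempty, and $\delta':=\max_{l\in L}s_l<\delta$. For every $f\in\C$,
\[
\{l:|\mu^o_l(f)|>\delta'\}\subseteq L^c,
\]
a set of fewer than $K/2$ indices. The strict form of Step~1 then gives $|\hat\mu^o(f)|\le\delta'$. Taking the supremum over $f$ gives $\sup_f|\hat\mu^o(f)|\le\delta'<\delta$. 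This holds pointwise in $\omega$ for every realization of the corruption, which is why no measurability of the corrupted values is needed.

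Next comes centering. By hypothesis $\E Z_l\le\eps_{\delta,B}$, so
\[
\frac12-\frac{n_o}{K}-\frac1K\sum_l\E Z_l\ge\Delta_o>0 .
\]
Therefore
\[
\Bigl\{\sum_l Z_l\ge \frac K2-n_o\Bigr\}\subseteq\Bigl\{\frac1K\Bigl|\sum_l(Z_l-\E Z_l)\Bigr|\ge\Delta_o\Bigr\}.
\]
Steps~4 and~5 of the proof of Theorem~\ref{thm:MoMproc} then apply unchanged, because the $Z_l$ are the clean indicators. Step~4 gives $\alpha^Z(q+1)\le\alpha^Y(qB)$. In Step~5, the condition $q\le\frac27K\Delta_o$ is exactly $\Delta_o\ge 7q/(2K)$, and $q<K$ holds since $\Delta_o\le\frac12$. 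Lemma~\ref{lem:Rio61}, together with the monotonicity of $\Rb_{K,q}$ in its second argument, yields the bound $\Rb_{K,q}(\Delta_o,\alpha^Y(qB))$. The final sentence of the corollary is just a restatement of $\Delta_o>0$.

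The only delicate point is the counting inclusion. Its proof must not rely on the corrupted block means, or on the set $O$, being measurable; in particular one must not define indicators from the corrupted sample. Defining $L$ pointwise via the clean $s_l$, and using only the deterministic bound $|O|\le n_o$, handles this.
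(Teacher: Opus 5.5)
Your proof is correct and follows the paper's argument. Like the paper, you rerun Step~2 with the clean indicators $Z_l$, count every corrupted block as a deviating one to obtain $\{\sup_f|\hat\mu^o(f)|\ge\delta\}\subseteq\{\sum_lZ_l\ge K/2-n_o\}$, and then apply Steps~3--5 with $\Delta_o$ in place of $\Delta$; excluding the corrupted indices from $L$ before setting $\delta'=\max_{l\in L}s_l$ is an immaterial variation, and you make the pointwise, measurability-free nature of the inclusion more explicit than the paper does.
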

\begin{proof}
	In Step~2 of the proof of Theorem~\ref{thm:MoMproc}, apply the argument to the corrupted block means and count every corrupted block as a deviating one: the set $\{l:|\mu^o_l(f)|>\delta'\}$ is contained in $\{l:s_l\ge\delta\}\cup\{\text{corrupted blocks}\}$, where $s_l$ refers to the uncorrupted sample. This gives $\{\sup_f|\hat\mu^o(f)|\ge\delta\}\subseteq\{\sum_lZ_l\ge K/2-n_o\}$, and Steps~3--5 apply with $\Delta_o$ in place of $\Delta$.
\end{proof}

The single-block probabilities $\eps_{\delta,B}$ can be bounded by a covering argument whenever the class is Lipschitz in a parameter ranging over a totally bounded metric space. For a totally bounded metric space $(\Theta,d)$ and $\eta>0$, let $N(\Theta,\eta)$ denote the smallest number of points $\theta_1,\ldots,\theta_N\in\Theta$ such that every $\theta\in\Theta$ satisfies $d(\theta,\theta_i)\le\eta$ for some $i$.

\begin{proposition}[Lipschitz-parametrized classes]\label{prop:Lipf}
	Let $\C=\{f_\theta:\theta\in\Theta\}$, where $(\Theta,d)$ is a totally bounded metric space, $\E|f_\theta(Y_0)|<\infty$, and
	\[
	|f_\theta(y)-f_{\theta'}(y)|\le d(\theta,\theta')\,\Lambda(y),\qquad \theta,\theta'\in\Theta,\ y\in\Y,
	\]
	for a measurable $\Lambda\ge0$ with $\E\Lambda(Y_0)<\infty$. Then $\C$ satisfies~\eqref{eq:S}, and for every $l\in\N$ and $\delta,\eta>0$,
	\[
	\P\Bigl(\sup_{\theta\in\Theta}|\mu_l(f_\theta)|\ge\delta\Bigr)
	\le
	N(\Theta,\eta)\,\sup_{\theta\in\Theta}\P\bigl(|\mu_l(f_\theta)|\ge\delta/2\bigr)
	+\frac{4\eta\,\E\Lambda(Y_0)}{\delta}.
	\]
	If $\Lambda\equiv L$ is constant and $4L\eta<\delta$, the last term can be omitted.
\end{proposition}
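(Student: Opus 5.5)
The proof has two parts: verifying condition~\eqref{eq:S}, and a covering bound for a single block.

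\emph{Condition~\eqref{eq:S}.} A totally bounded metric space is separable, so I would take a countable dense set $\Theta_0\subseteq\Theta$ and put $\C_0=\{f_\theta:\theta\in\Theta_0\}$. Given $\theta$, choose $\theta_m\in\Theta_0$ with $d(\theta_m,\theta)\to0$. The Lipschitz hypothesis gives pointwise convergence $f_{\theta_m}(y)\to f_\theta(y)$. It also gives
\[
|\E f_{\theta_m}(Y_0)-\E f_\theta(Y_0)|\le d(\theta_m,\theta)\,\E\Lambda(Y_0)\to0 .
\]
This is exactly situation (c) listed after~\eqref{eq:S}. By Lemma~\ref{lem:meas}(i), the single-block supremum is therefore measurable.

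\emph{Covering bound.} Fix $\eta>0$ and let $\theta_1,\dots,\theta_N$, with $N=N(\Theta,\eta)$, be an $\eta$-net. For each $\theta$ pick $i$ with $d(\theta,\theta_i)\le\eta$. Then
\[
|\mu_l(f_\theta)-\mu_l(f_{\theta_i})|\le \frac{\eta}{B}\sum_{j=lB+1}^{lB+B}\Lambda(Y_j)+\eta\,\E\Lambda(Y_0)=:\eta\,W_l ,
\]
since the centering constants also differ by at most $\eta\E\Lambda(Y_0)$. Consequently $\sup_\theta|\mu_l(f_\theta)|\le\max_i|\mu_l(f_{\theta_i})|+\eta W_l$, which gives the event inclusion
\[
\Bigl\{\sup_\theta|\mu_l(f_\theta)|\ge\delta\Bigr\}\subseteq\bigcup_{i\le N}\{|\mu_l(f_{\theta_i})|\ge\delta/2\}\cup\{\eta W_l\ge\delta/2\}.
\]
A union bound turns the first part into $N\sup_\theta\P(|\mu_l(f_\theta)|\ge\delta/2)$. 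For the second part, identical distribution gives $\E W_l=2\E\Lambda(Y_0)$, so Markov's inequality yields
\[
\P(\eta W_l\ge\delta/2)\le \frac{2\eta\cdot 2\E\Lambda(Y_0)}{\delta}=\frac{4\eta\E\Lambda(Y_0)}{\delta},
\]
which is the stated constant.

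\emph{Constant $\Lambda\equiv L$.} In this case $\eta W_l=2L\eta$ is deterministic. If $4L\eta<\delta$, then $2L\eta<\delta/2$, so the event $\{\eta W_l\ge\delta/2\}$ is empty and its term drops out.

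The only delicate point is keeping track of the centering term, which doubles the Markov constant. Measurability of the supremum is already supplied by Lemma~\ref{lem:meas}, and the events $\{\eta W_l\ge\delta/2\}$ are measurable because $\Lambda$ is.
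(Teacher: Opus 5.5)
Your proof is correct and follows the paper's argument essentially line by line. You verify \eqref{eq:S} through separability and situation (c), use an $\eta$-net with the same random error $W_l$ (whose centering term doubles the Markov constant to $4\eta\,\E\Lambda(Y_0)/\delta$), and note that $W_l$ is deterministic and below $\delta/2$ when $\Lambda\equiv L$ and $4L\eta<\delta$.
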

\begin{proof}
	The metric space $\Theta$ is separable, and $\theta\mapsto f_\theta(y)$ as well as $\theta\mapsto\E f_\theta(Y_0)$ are Lipschitz, so~\eqref{eq:S} holds by (c). Let $\theta_1,\ldots,\theta_N$ be an $\eta$-net. If $d(\theta,\theta_i)\le\eta$ then $|\mu_l(f_\theta)-\mu_l(f_{\theta_i})|\le W_l:=\eta\bigl(B^{-1}\sum_{j=1}^B\Lambda(Y_{lB+j})+\E\Lambda(Y_0)\bigr)$. If $\max_i|\mu_l(f_{\theta_i})|<\delta/2$ and $W_l<\delta/2$, then $\sup_\theta|\mu_l(f_\theta)|\le\max_i|\mu_l(f_{\theta_i})|+W_l<\delta$. Hence 
	\[
	\{\sup_\theta|\mu_l(f_\theta)|\ge\delta\}\subseteq\bigcup_i\{|\mu_l(f_{\theta_i})|\ge\delta/2\}\cup\{W_l\ge\delta/2\},
	\]
	and Markov's inequality gives $\P(W_l\ge\delta/2)\le 2\E W_l/\delta=4\eta\E\Lambda(Y_0)/\delta$. If $\Lambda\equiv L$, then $W_l=2L\eta<\delta/2$.
\end{proof}

Unlike Lemma 2.2, the following lemma has explicit constants and requires no rate on the mixing coefficients.
We write
\[
A_B:=\sum_{k=1}^B\alpha^X(k)\quad\text{and}\quad
A_\infty:=\sum_{k\ge1}\alpha^X(k).
\]
\begin{lemma}[Single-block bounds]\label{lem:block}
	Let $(X_n)_{n\in\N}$ be identically distributed, $l\in\N$, $\eta>0$, and let $\mu_l(g)=\frac1B\sum_{j=1}^Bg(X_{lB+j})-\E g(X_0)$.
	\begin{enumerate}[(i)]
		\item For a measurable set $S$ and $p=\P(X_0\in S)$,
		\[
		\P\bigl(|\mu_l(\ind_S)|\ge\eta\bigr)\le\frac{p(1-p)+2\sum_{k=1}^B\min\{\alpha^X(k),\,p(1-p)\}}{B\eta^2}\le\frac{\frac14+2A_B}{B\eta^2}.
		\]
		
		\item If $|g|\le a$, then
		$\displaystyle\P\bigl(|\mu_l(g)|\ge\eta\bigr)\le\frac{a^2(1+8A_B)}{B\eta^2}$.
		
		\item If $\E|g(X_0)|<\infty$, then for every $M\ge0$,
		\[
		\P\bigl(|\mu_l(g)|\ge\eta\bigr)\le\frac{4M^2(1+8A_B)}{B\eta^2}+\frac{4\,\E\bigl(|g(X_0)|-M\bigr)_+}{\eta}.
		\]
	\end{enumerate}
\end{lemma}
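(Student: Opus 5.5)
All three parts follow from Chebyshev's inequality applied to the block mean, with the variance controlled by the covariance inequality for bounded strongly mixing variables. Part (iii) additionally needs a truncation step.

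For (i), write $\xi_j:=\ind_S(X_{lB+j})-p$, so that $\mu_l(\ind_S)=B^{-1}\sum_{j=1}^B\xi_j$. Then
\[
\var\Bigl(\sum_{j}\xi_j\Bigr)=Bp(1-p)+2\sum_{1\le i<j\le B}\cov(\xi_i,\xi_j).
\]
For indicators, $|\cov(\xi_i,\xi_j)|=|\P(X_{lB+i}\in S,X_{lB+j}\in S)-p^2|\le\alpha^X(j-i)$ directly from~\eqref{eq:dep_meas}. By Cauchy--Schwarz we also have $|\cov(\xi_i,\xi_j)|\le p(1-p)$. Grouping the pairs by lag $k=j-i$, each lag occurs at most $B$ times. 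This gives
\[
\var\le B\Bigl(p(1-p)+2\sum_{k=1}^{B}\min\{\alpha^X(k),p(1-p)\}\Bigr),
\]
and Chebyshev's inequality yields the first bound. The second bound follows from $p(1-p)\le\frac14$ and $\min\le\alpha^X(k)$.

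For (ii), let $\xi_j=g(X_{lB+j})-\E g(X_0)$. Then $\var(\xi_j)\le\E g(X_0)^2\le a^2$. The covariance inequality quoted in the proof of Lemma~\ref{lem:Hansen}, for variables bounded by $a$, gives $|\cov(\xi_i,\xi_j)|\le 4a^2\alpha^X(|i-j|)$. Summing over lags as in (i) gives $\var(\sum_j\xi_j)\le Ba^2(1+8A_B)$, and Chebyshev's inequality concludes. The main point to check here is the constant: it must be $4a^2$ for $|g|\le a$, which is consistent with the constant $16n^{2\tau}$ used for variables bounded by $2n^\tau$.

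For (iii), truncate as $g=g_M+g^M$ with $g_M:=\max(-M,\min(g,M))$, so that $|g_M|\le M$ and $|g^M|=(|g|-M)_+$. Then $\mu_l(g)=\mu_l(g_M)+\mu_l(g^M)$, and
\[
\P(|\mu_l(g)|\ge\eta)\le\P(|\mu_l(g_M)|\ge\eta/2)+\P(|\mu_l(g^M)|\ge\eta/2).
\]
The first term is bounded by (ii) with $a=M$ and threshold $\eta/2$, giving $4M^2(1+8A_B)/(B\eta^2)$. For the second term, Markov's inequality together with $\E|\mu_l(g^M)|\le 2\E(|g(X_0)|-M)_+$ gives the bound $\frac{4\E(|g(X_0)|-M)_+}{\eta}$. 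This step uses only the identical marginal distributions.

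The only delicate points are the constants and the lag counting. The number of pairs at lag $k$ is $B-k\le B$, so a factor $2$ for ordered pairs appears in front of the sum. In (i), the $\min$ with $p(1-p)$ comes from applying two separate bounds to each covariance term.
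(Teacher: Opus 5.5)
Your proposal is correct and follows the paper's proof essentially step by step: Chebyshev's inequality with the covariances grouped by lag, the two bounds $\alpha^X(k)$ and $p(1-p)$ on each covariance in (i), the covariance inequality $|\cov(U,V)|\le 4\alpha\,\|U\|_\infty\|V\|_\infty$ giving $\gamma_0=a^2$ and $\gamma_k=4a^2\alpha^X(k)$ in (ii), and, in (iii), truncation at level $M$, (ii) at level $\eta/2$, and Markov's inequality. The one point to state explicitly is that in (ii) the covariance inequality must be applied to the uncentered $g(X_{lB+i})$, which are bounded by $a$; the centered $\xi_j$ are only bounded by $2a$, which would give $16a^2$, and this step is legitimate because covariances are invariant under shifts.
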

\begin{proof}
	All three bounds follow from Chebyshev's inequality and the bound
	\[
	\var\Bigl(\sum_{j=1}^BW_j\Bigr)\le\sum_{i,j}|\cov(W_i,W_j)|\le B\Bigl(\gamma_0+2\sum_{k=1}^B\gamma_k\Bigr),
	\]
	valid whenever $|\cov(W_i,W_j)|\le\gamma_{|i-j|}$.
	
	(i) For two events of probability $p$, $|\P(G\cap H)-p^2|\le p(1-p)$, and $|\P(G\cap H)-\P(G)\P(H)|\le\alpha^X(k)$ if $G\in\sigma(X_i)$, $H\in\sigma(X_{i+k})$, by the very definition~\eqref{eq:mix_coef}.
	
	(ii) The covariance inequality 
	\[
	|\cov(U,V)|\le4\alpha(\sigma(U),\sigma(V))\|U\|_\infty\|V\|_\infty
	\] 
	for bounded random variables (see Theorem~A.5 in~\cite{hall2014martingale}; an $L^p$ version is due to Davydov~\cite{davydov1968convergence}) gives $\gamma_k=4a^2\alpha^X(k)$, $k\ge1$, and $\gamma_0=a^2$.
	
	(iii) Let $g_M=\max(-M,\min(M,g))$ and $r_M=g-g_M$, so that $|r_M|=(|g|-M)_+$. Then $\mu_l(g)=\mu_l(g_M)+\mu_l(r_M)$, $\P(|\mu_l(g_M)|\ge\eta/2)$ is bounded by (ii), and $\P(|\mu_l(r_M)|\ge\eta/2)\le2\E|\mu_l(r_M)|/\eta\le4\E|r_M(X_0)|/\eta$ by Markov's inequality.
\end{proof}

\begin{remark}
	If $\E|g(X_0)|^r<\infty$ for some $r>1$, then $\E(|g(X_0)|-M)_+\le\E|g(X_0)|^rM^{1-r}$, and the choice $M\asymp(B\eta)^{1/(r+1)}$ in Lemma~\ref{lem:block}(iii) gives, when $A_\infty<\infty$, a bound of order $\eta^{-2r/(r+1)}B^{-(r-1)/(r+1)}$. Both this bound and Lemma~\ref{lem:Hansen} (if $a>1$) guarantee $\eps_{\delta,B}\le\frac14$ as soon as $B\gtrsim\delta^{-2r/(r-1)}$. The sharp form of (i) is important for indicators of rare events, e.g.\ when estimating high quantiles in Section~\ref{sec:VaR}.
\end{remark}

\section{Nonparametric regression}\label{sec:regression}

Let $(X_t,Y_t)_{t\ge1}$ be an identically distributed $\R^d\times\R$-valued sequence, and let $\psi:\R\to\R$ be a measurable function with $\E|\psi(Y_1)|<\infty$. We wish to estimate the regression function
\[
m(x)=\E\bigl[\psi(Y_1)\mid X_1=x\bigr],\qquad x\in D,
\]
on a compact set $D\subset\R^d$. Kernel estimators of $m$ under mixing conditions have a long history. Roussas~\cite{ROUSSAS1990107} proved strong consistency of the Nadaraya--Watson estimator, uniformly over compacts and with rates, for strictly stationary sequences satisfying any of the four standard mixing conditions, allowing unbounded $\psi$. Bosq~\cite{bosq1993bernstein} obtained Bernstein-type large deviation inequalities for strongly mixing partial sums, which yield exponential rates under exponential moment conditions; and Thanh and Yin~\cite{THANH2015195} established sharp complete convergence results for weighted sums $\sum_iw_{ni}\xi_i$ of strongly mixing variables, with an application to fixed-design regression estimators $\sum_iw_{ni}(x)Y_{i}$. These estimators are asymptotic, or rely on (exponential) moment assumptions, and are not robust to outliers.
For independent data whose noise has only a finite second moment, Ben-Hamou and Guyader \cite{benhamou2025deviation} showed that median-of-means versions of nearest-neighbor and kernel regression estimators attain optimal deviation bounds.

Applying the median-of-means principle to the numerator and the denominator, we obtain an estimator with explicit non-asymptotic deviation bounds for $\alpha$-mixing data that are uniform over $D$, require only $\E|\psi(Y_1)|<\infty$, and allow corrupted blocks; the required block size is determined by higher moments $\E|\psi(Y_1)|^r<\infty$, $r>1$ (Corollary \ref{cor:NW}).

\medskip 
Let $\Ker:\R^d\to[0,\infty)$ be a kernel (we write $\Ker$, since $K$ denotes the number of blocks) such that
\[
\int\Ker=1,\qquad \|\Ker\|_\infty<\infty,\qquad |\Ker(u)-\Ker(v)|\le L_\Ker\|u-v\|_\infty,
\]
and put $\Ker_h(u)=h^{-d}\Ker(u/h)$ for a bandwidth $h>0$. The block statistics are
\[
\hat g_l(x)=\frac1B\sum_{j=1}^B\psi(Y_{lB+j})\Ker_h(x-X_{lB+j}),
\qquad
\hat f_l(x)=\frac1B\sum_{j=1}^B\Ker_h(x-X_{lB+j}),
\]
and the \emph{MoM Nadaraya--Watson estimator} is
\[
\hat m(x)=\frac{\hat g(x)}{\hat f(x)},\qquad
\hat g(x)=\med\bigl(\hat g_0(x),\ldots,\hat g_{K-1}(x)\bigr),\quad
\hat f(x)=\med\bigl(\hat f_0(x),\ldots,\hat f_{K-1}(x)\bigr),
\]
with the lower median, and $\hat m(x):=0$ if $\hat f(x)=0$. Its smoothed target is $m_h=g_h/f_h$, where
\[
g_h(x)=\E\bigl[\psi(Y_1)\Ker_h(x-X_1)\bigr]=\E\bigl[m(X_1)\Ker_h(x-X_1)\bigr],\qquad f_h(x)=\E\Ker_h(x-X_1).
\]
We write $N(D,\eta)$ for the covering numbers of $D$ with respect to $\|\cdot\|_\infty$; for $D=[-R,R]^d$, $N(D,\eta)\le\lceil R/\eta\rceil^d$.

\begin{theorem}\label{thm:NW}
	Assume that $(X_t,Y_t)_{t\ge1}$ is $\alpha$-mixing with coefficients $\alpha(k)$, $A_B=\sum_{k=1}^B\alpha(k)$, and that at most $n_o$ blocks are corrupted. Let $b_h:=\inf_{x\in D}f_h(x)>0$, $M_h:=\sup_{x\in D}|m_h(x)|$, $\delta_1>0$ and $0<\delta_2<b_h$. For $\eta_1>0$ and $M\ge0$, let
	\begin{align*}
		\eps_1&:=N(D,\eta_1)\Bigl[\frac{16M^2(1+8A_B)}{B\delta_1^2}+\frac{8\,\E\bigl(h^{-d}\|\Ker\|_\infty|\psi(Y_1)|-M\bigr)_+}{\delta_1}\Bigr]+\frac{4\eta_1L_\Ker\,\E|\psi(Y_1)|}{h^{d+1}\delta_1},\\
		\eps_2&:=N\Bigl(D,\frac{\delta_2h^{d+1}}{8L_\Ker}\Bigr)\,\frac{4\|\Ker\|_\infty^2(1+8A_B)}{Bh^{2d}\delta_2^2},
	\end{align*}
	and $\Delta_i:=\frac12-\frac{n_o}K-\eps_i$. If $\Delta_1,\Delta_2>0$, then for every integer $1<q\le\frac27K\min(\Delta_1,\Delta_2)$,
	\[
	\P^*\Bigl(\sup_{x\in D}\bigl|\hat m(x)-m_h(x)\bigr|>\frac{\delta_1+M_h\delta_2}{b_h-\delta_2}\Bigr)
	\le\Rb_{K,q}\bigl(\Delta_1,\alpha(qB)\bigr)+\Rb_{K,q}\bigl(\Delta_2,\alpha(qB)\bigr).
	\]
\end{theorem}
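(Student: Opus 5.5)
The plan is to apply Corollary~\ref{cor:contamination} twice: once to the numerator class $\C_1=\{y\mapsto\psi(y_2)\Ker_h(x-y_1):x\in D\}$ and once to the denominator class $\C_2=\{y\mapsto\Ker_h(x-y_1):x\in D\}$. Both are evaluated on the pairs $(X_t,Y_t)$, and $D$ serves as the parameter space. On the good event the resulting uniform bounds are then combined by a deterministic ratio inequality.

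\emph{Ratio step.} Suppose $\sup_x|\hat g(x)-g_h(x)|<\delta_1$ and $\sup_x|\hat f(x)-f_h(x)|<\delta_2$. Then $\hat f(x)>b_h-\delta_2>0$, so $\hat m=\hat g/\hat f$. Write
\[
\hat m-m_h=\frac{(\hat g-g_h)-m_h(\hat f-f_h)}{\hat f}.
\]
This gives $|\hat m(x)-m_h(x)|\le(\delta_1+M_h\delta_2)/(b_h-\delta_2)$. Note that the median of the block statistics minus a constant equals the median of the centered block means, so $\hat g(x)-g_h(x)=\hat\mu(f_x)$. Hence the event in the theorem is contained in the union of $\{\sup|\hat\mu_1|\ge\delta_1\}$ and $\{\sup|\hat\mu_2|\ge\delta_2\}$. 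A union bound then reduces everything to showing $\eps_{\delta_i,B}\le\eps_i$ for $i=1,2$, after which Corollary~\ref{cor:contamination} applies with $\alpha^Y=\alpha$.

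\emph{Single-block bounds.} Both classes are Lipschitz in $x$ with respect to $\|\cdot\|_\infty$. Indeed, $|\Ker_h(x-u)-\Ker_h(x'-u)|\le h^{-d-1}L_\Ker\|x-x'\|_\infty$. So in Proposition~\ref{prop:Lipf} one can take $\Lambda(y)=h^{-d-1}L_\Ker|\psi(y_2)|$ for $\C_1$ and the constant $L=h^{-d-1}L_\Ker$ for $\C_2$. This also yields condition~\eqref{eq:S}.

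For $\C_1$, use an $\eta_1$-net together with Lemma~\ref{lem:block}(iii) at level $\delta_1/2$, using $|f_x|\le h^{-d}\|\Ker\|_\infty|\psi|$. This produces $\frac{16M^2(1+8A_B)}{B\delta_1^2}+\frac{8\E(\cdot-M)_+}{\delta_1}$, and the remainder term of the proposition is $4\eta_1L_\Ker\E|\psi|/(h^{d+1}\delta_1)$. There is one subtlety: Lemma~\ref{lem:block} is stated with $\E|g|$, whereas the truncation needs $(|g(X_0)|-M)_+\le(h^{-d}\|\Ker\|_\infty|\psi|-M)_+$. This follows by monotonicity.

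For $\C_2$, choose $\eta=\delta_2h^{d+1}/(8L_\Ker)$, so that $4L\eta=\delta_2/2<\delta_2$ and the extra term drops. Then apply Lemma~\ref{lem:block}(ii) with $a=h^{-d}\|\Ker\|_\infty$ at level $\delta_2/2$, which gives exactly $\eps_2$. The mixing coefficients of the functions of $(X_t,Y_t)$ are dominated by $\alpha$, so $A_B$ is the right quantity.

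\emph{Main obstacle.} The remaining work is bookkeeping rather than anything deep. First, the corruption must affect both medians through the same $n_o$ blocks; this is handled because each application of the corollary counts corrupted blocks separately. Second, outer probability must be used for the union, which is fine because outer measure is subadditive. Third, the event with a strict "$>$" must be checked to lie inside the union of the two "$\ge$" events. Finally, the parameter $q\le\frac27K\min(\Delta_1,\Delta_2)$ is admissible in both applications.
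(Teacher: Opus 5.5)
Your proposal is correct and follows essentially the same route as the paper. It uses the same ratio inequality to reduce the event to $\{\sup_x|\hat g(x)-g_h(x)|\ge\delta_1\}\cup\{\sup_x|\hat f(x)-f_h(x)|\ge\delta_2\}$, together with translation equivariance of the lower median. For the numerator class it combines Proposition~\ref{prop:Lipf} with $\Lambda=L_\Ker h^{-d-1}|\psi|$ and Lemma~\ref{lem:block}(iii) at level $\delta_1/2$; for the denominator class it takes the net width $\delta_2h^{d+1}/(8L_\Ker)$ and Lemma~\ref{lem:block}(ii) at level $\delta_2/2$. It then applies Corollary~\ref{cor:contamination} to each class and takes a union bound in outer probability.
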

\begin{proof}
	\emph{Deterministic step.} If $|a-A|\le\delta_1$ and $|b-F|\le\delta_2<F$, then
	\[
	\Bigl|\frac ab-\frac AF\Bigr|=\Bigl|\frac{(a-A)+\frac AF(F-b)}{b}\Bigr|\le\frac{\delta_1+|A/F|\,\delta_2}{F-\delta_2}.
	\]
	Applied with $a=\hat g(x)$, $A=g_h(x)$, $b=\hat f(x)$, $F=f_h(x)\ge b_h$, it shows that the event on the left-hand side is contained in $\{\sup_x|\hat g(x)-g_h(x)|\ge\delta_1\}\cup\{\sup_x|\hat f(x)-f_h(x)|\ge\delta_2\}$. Since the median is translation equivariant, $\hat g(x)-g_h(x)$ and $\hat f(x)-f_h(x)$ are the MoM processes of the classes $\C_1=\{(u,v)\mapsto\psi(v)\Ker_h(x-u):x\in D\}$ and $\C_2=\{u\mapsto\Ker_h(x-u):x\in D\}$.

	\emph{Probabilistic step.} Both classes are Lipschitz in $x$: $|\psi(v)\Ker_h(x-u)-\psi(v)\Ker_h(x'-u)|\le\|x-x'\|_\infty L_\Ker h^{-d-1}|\psi(v)|$, and similarly with $|\psi(v)|$ replaced by $1$. By Proposition~\ref{prop:Lipf} they satisfy~\eqref{eq:S}. For $\C_1$, Proposition~\ref{prop:Lipf} with $\Lambda(u,v)=L_\Ker h^{-d-1}|\psi(v)|$ and Lemma~\ref{lem:block}(iii) at level $\delta_1/2$ (note $|\psi(v)\Ker_h(x-u)|\le h^{-d}\|\Ker\|_\infty|\psi(v)|$) show that $\eps_1$ bounds the single-block probabilities at level $\delta_1$. For $\C_2$, the choice $\eta=\delta_2h^{d+1}/(8L_\Ker)$ makes the Lipschitz term vanish in Proposition~\ref{prop:Lipf}, and Lemma~\ref{lem:block}(ii) with $a=h^{-d}\|\Ker\|_\infty$ at level $\delta_2/2$ yields $\eps_2$. Corollary~\ref{cor:contamination} applied to both classes completes the proof.
\end{proof}

\begin{lemma}[Bias]\label{lem:NWbias}
	If $\Ker(u)=0$ for $\|u\|_\infty>1$, $f_h(x)>0$, and $|m(u)-m(x)|\le\omega_m(h)$ whenever $\|u-x\|_\infty\le h$, then $|m_h(x)-m(x)|\le\omega_m(h)$.
\end{lemma}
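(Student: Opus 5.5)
We write $m_h(x)-m(x)$ as a single kernel-weighted average of the increments $m(u)-m(x)$ and then use the modulus of continuity on the support of the kernel. By definition $m_h(x)=g_h(x)/f_h(x)$ with $g_h(x)=\E[m(X_1)\Ker_h(x-X_1)]$ and $f_h(x)=\E\Ker_h(x-X_1)>0$. Since $m(x)$ is a constant,
\[
m_h(x)-m(x)=\frac{\E\bigl[(m(X_1)-m(x))\,\Ker_h(x-X_1)\bigr]}{f_h(x)} .
\]
Before this step we must check that $g_h(x)$ is finite, so that the subtraction is legitimate. This holds because $|m(X_1)|\Ker_h(x-X_1)\le h^{-d}\|\Ker\|_\infty\,\E[|\psi(Y_1)|\mid X_1]$, which is integrable. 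The identity $g_h(x)=\E[m(X_1)\Ker_h(x-X_1)]$ itself is the tower property, already recorded in the definition of $g_h$.

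The main estimate uses the support condition. Since $\Ker(u)=0$ for $\|u\|_\infty>1$, the weight $\Ker_h(x-X_1)=h^{-d}\Ker((x-X_1)/h)$ vanishes unless $\|X_1-x\|_\infty\le h$. On that event the hypothesis gives $|m(X_1)-m(x)|\le\omega_m(h)$. Because $\Ker_h\ge0$, we obtain pointwise
\[
\bigl|m(X_1)-m(x)\bigr|\,\Ker_h(x-X_1)\le\omega_m(h)\,\Ker_h(x-X_1).
\]
Taking expectations and dividing by $f_h(x)$ yields $|m_h(x)-m(x)|\le\omega_m(h)$.

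The only delicate point is that $m$ is a version of the conditional expectation, defined only $\P_{X_1}$-almost everywhere. The modulus hypothesis should therefore be read for the chosen version. Then the pointwise inequality above holds almost surely, which suffices for the expectation bound. No other obstacle is expected.
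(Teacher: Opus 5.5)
Your proof is correct and follows essentially the same argument as the paper. Both write $m_h(x)$ as the $\Ker_h(x-X_1)$-weighted average of $m(X_1)$, note that this weight vanishes outside $\{\|X_1-x\|_\infty\le h\}$, and bound the weighted average of $|m(X_1)-m(x)|$ by $\omega_m(h)$. Your remarks on the finiteness of $g_h(x)$ and on fixing a version of $m$ are valid refinements that the paper leaves implicit.
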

\begin{proof} We have
	$m_h(x)=\E[w\,m(X_1)]/\E[w]$ with the weight $w=\Ker_h(x-X_1)\ge0$, which vanishes unless $\|X_1-x\|_\infty\le h$. Hence $|m_h(x)-m(x)|\le\E[w\,|m(X_1)-m(x)|]/\E w\le\omega_m(h)$.
\end{proof}

\begin{corollary}\label{cor:NW}
	Assume, in addition, that $A_\infty=\sum_k\alpha(k)<\infty$ and $\E|\psi(Y_1)|^r<\infty$ for some $r>1$. Fix $h$, $\delta_1$ and $\delta_2<b_h$. Then there is $B_0$ such that $\eps_1,\eps_2\le\frac18$ for every $B\ge B_0$ (choose $\eta_1$ small, then $M$ and $B$ large); hence $\Delta_1,\Delta_2\ge\frac14$ whenever $n_o\le K/8$, and, since $\Rb_{K,q}$ is nonincreasing in $\Delta$, the bound of Theorem~\ref{thm:NW} is at most $2\Rb_{K,q}(\frac14,\alpha(qB))$. 
	
	In particular, for fixed $h$ and $B\ge B_0$, as $K\to\infty$, the probability that $\sup_D|\hat m-m_h|$ exceeds a fixed tolerance is $O(n^{-a}\log^an)$ if $\alpha(k)=O(k^{-a})$, $a>1$, and $O(\kappa^{\sqrt n})$ if $\alpha(k)=O(\rho^k)$, where $n=KB$ (Remark~\ref{rem:decrate}). If, in addition, $\Ker$ vanishes outside $\{\|u\|_\infty\le1\}$, $X_1$ has a density that is continuous and bounded away from $0$ on a neighborhood $D'$ of $D$, and $m$ is bounded and uniformly continuous on $D'$, then $\liminf_{h\to0}b_h>0$, $\sup_{h\le h_0}M_h<\infty$ and $\sup_D|m_h-m|\to0$ by Lemma~\ref{lem:NWbias}; hence $\sup_D|\hat m-m|\to0$ in outer probability along suitable sequences $h_n,\delta_{1,n},\delta_{2,n}\to0$ and $B_n,K_n\to\infty$.
\end{corollary}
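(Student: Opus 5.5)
The proof has three parts: make $\eps_1,\eps_2\le\frac18$ by choosing parameters, read off the rates from Theorem~\ref{thm:NW} and Remark~\ref{rem:decrate}, and then handle the asymptotic consistency with a diagonal argument.

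\emph{Parameter choice.} Since $A_B\le A_\infty<\infty$, every factor $1+8A_B$ in $\eps_1,\eps_2$ is bounded by $1+8A_\infty$, uniformly in $B$.

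For $\eps_2$, the covering number $N(D,\delta_2h^{d+1}/(8L_\Ker))$ does not depend on $B$. Hence $\eps_2=O(1/B)$ and $\eps_2\le\frac18$ for $B$ large.

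For $\eps_1$ the order of choices matters.
\begin{enumerate}[(1)]
	\item Choose $\eta_1$ so that $4\eta_1L_\Ker\E|\psi(Y_1)|/(h^{d+1}\delta_1)\le\frac1{24}$.
	\item With $N(D,\eta_1)$ now fixed, choose $M$ so that $N(D,\eta_1)\cdot 8\,\E\bigl(h^{-d}\|\Ker\|_\infty|\psi(Y_1)|-M\bigr)_+/\delta_1\le\frac1{24}$. This is possible because the expectation tends to $0$ as $M\to\infty$, by dominated convergence using $\E|\psi|<\infty$. Here $\E|\psi|^r<\infty$ only makes this quantitative, with bound $\lesssim M^{1-r}$.
	\item Choose $B$ large enough that the remaining term $16N(D,\eta_1)M^2(1+8A_\infty)/(B\delta_1^2)$ is at most $\frac1{24}$.
\end{enumerate}
This gives $B_0$ with $\eps_1,\eps_2\le\frac18$ for all $B\ge B_0$. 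If $n_o\le K/8$, then $\Delta_i\ge\frac12-\frac18-\frac18=\frac14$. Since $\Rb_{K,q}$ is nonincreasing in $\Delta$, the right-hand side of Theorem~\ref{thm:NW} is at most $2\Rb_{K,q}(\frac14,\alpha(qB))$ for every admissible $q\le\frac27K\cdot\frac14$.

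\emph{Rates.} With $\Delta=\frac14$ fixed, the choices of $q$ in Corollary~\ref{cor:specmix} give the stated rates, exactly as in Remark~\ref{rem:decrate}: $q=\lfloor c_\Delta K/\log n\rfloor$ in the polynomial case and $q=\lfloor\sqrt{K/B}\rfloor$ in the geometric case. For large $K$ both choices satisfy $1<q\le K/14$.

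\emph{Asymptotics.} Three facts are needed as $h\to0$.

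First, $f_h(x)=\int\Ker(u)p(x-hu)\,du$ with $\|u\|_\infty\le1$. For $h$ small, $x-hu$ lies in $D'$, where the density is at least $p_0>0$. Hence $b_h\ge p_0$, and $\liminf_{h\to0} b_h>0$.

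Second, $M_h\le\sup_{D'}|m|$ by the weighted-average representation used in the proof of Lemma~\ref{lem:NWbias}.

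Third, uniform continuity of $m$ on $D'$ gives a modulus $\omega_m(h)\to0$. Lemma~\ref{lem:NWbias} then yields $\sup_D|m_h-m|\le\omega_m(h)$.

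Now take sequences $h_n\downarrow0$ and $\delta_{1,n},\delta_{2,n}\to0$ with $\delta_{2,n}<p_0/2$. Then the tolerance $(\delta_1+M_h\delta_2)/(b_h-\delta_2)\to0$. For each $n$ let $B_n\ge B_0(h_n,\delta_{1,n},\delta_{2,n})$, and let $K_n$ be large enough that $2\Rb_{K_n,q_n}(\frac14,\alpha(q_nB_n))\le1/n$. Such $K_n$ exists because $\Rb_{K,q}(\frac14,\alpha(qB))\to0$ as $K\to\infty$ for any fixed $B$ when $q\asymp K/\log K$, using $\alpha(k)\to0$. Combining this with the bias bound gives $\sup_D|\hat m-m|\to0$ in outer probability.

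The main obstacle is the nested order of choices, $\eta_1$, then $M$, then $B$. The choice of $M$ must be made after $N(D,\eta_1)$ is fixed, and the diagonal selection of $B_n,K_n$ depends on all earlier choices. Beyond that bookkeeping, everything reduces to Theorem~\ref{thm:NW}, Corollary~\ref{cor:specmix} and Lemma~\ref{lem:NWbias}.
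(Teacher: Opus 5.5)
Your proposal is correct and follows the same route as the paper. The paper's argument is the sketch embedded in the statement itself: choose $\eta_1$, then $M$, then $B$ to get $\eps_1,\eps_2\le\frac18$; use the monotonicity of $\Rb_{K,q}$ in $\Delta$ and the choices of $q$ from Corollary~\ref{cor:specmix} via Remark~\ref{rem:decrate}; and combine Lemma~\ref{lem:NWbias} with a diagonal choice of $h_n,\delta_{1,n},\delta_{2,n},B_n,K_n$. Your side remark is also consistent with the paper's introduction to Section~\ref{sec:regression}: the existence of $B_0$ needs only $\E|\psi(Y_1)|<\infty$, and the $r$-th moment merely makes the choice of $M$ explicit. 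In the diagonal step, state the assumption $n_o\le K_n/8$ explicitly, since the bound $2\Rb_{K,q}(\frac14,\cdot)$ you use there depends on it.
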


\section{Kernel density estimation}\label{sec:KDE}

Let $X_1, \ldots, X_n$ be identically distributed random variables with common probability density $f$ on $\R^d$.
The \emph{kernel density estimator} (KDE), also known as the Parzen--Rosenblatt estimator, is the nonparametric estimator defined by
\[
\widehat f_n(x) = \frac{1}{n h^d} \sum_{i=1}^n \Ker\!\left( \frac{X_i - x}{h} \right), \qquad x \in \R^d,
\]
where $h>0$ is a bandwidth parameter and $\Ker:\R^d \to [0,\infty)$ is an integrable kernel satisfying
\[
\int_{\R^d} \Ker(u)\,du = 1,
\]
see Tsybakov~\cite{tsybakov2009introduction}.
The function $\Ker$ is referred to as the \emph{kernel}, and the bandwidth $h$ acts as a smoothing parameter governing the bias--variance trade-off of $\widehat f_n$.

Kim and Lee \cite{KIM2005273} proved consistency and asymptotic normality of the KDE for possibly nonstationary $\alpha$-mixing sequences, N'drin~\cite{n2025hall} studied the recursive kernel density estimator of Hall and Patil for strongly mixing processes; Quintela-del-R\'io~\cite{Quintela-Del-Rio01072008} considered kernel estimation of the conditional hazard function for functional strongly mixing data. Uniform rates are given by
Bosq~\cite{bosq1998nonparametric}, Gy\"orfi et al.~\cite{gyorfi1989nonparametric}, and Hansen~\cite{hansen2008uniform}. None of these estimators is robust to outliers.

Humbert, Le Bars, and Minvielle~\cite{humbert2022robust} introduced the MoM-KDE in the $\mathcal O\cup\mathcal I$ framework of~\cite{lecue2019learning,lecue2020robust}: the sample consists of i.i.d.\ inliers and of an arbitrary (possibly adversarial) set of outliers $\mathcal O$. They proved that if the number $S$ of blocks satisfies $S\ge2|\mathcal O|+1$, then with probability at least $1-e^{-\gamma}$ the $L^\infty$ error is, up to a $\log S$ term, at most that of the outlier-free KDE computed from $n/S$ observations, and that the estimator is $L^1$-consistent. We now extend these results to $\alpha$-mixing data.

\paragraph{Setting.} Let $(X_t)_{t\ge1}$ be an $\R^d$-valued, identically distributed, $\alpha$-mixing sequence with coefficients $\alpha(k)$ and common density $f$, and let $A_B=\sum_{k=1}^B\alpha(k)$. In contrast with~\cite{humbert2022robust}, the blocks are formed by \emph{consecutive} observations: a random partition would destroy the dependence structure on which Lemma~\ref{lem:Rio61} relies (for i.i.d.\ data the two constructions have the same distribution). The block estimates and the MoM-KDE are
\[
\hat f_l(x)=\frac1{Bh^d}\sum_{j=1}^B\Ker\Bigl(\frac{x-X_{lB+j}}{h}\Bigr),\qquad
\hat f_{\rm MoM}(x)=\med\bigl(\hat f_0(x),\ldots,\hat f_{K-1}(x)\bigr)
\]
(lower median, so that $\hat f_{\rm MoM}$ is continuous) and, when a density is required and $\int\hat f_{\rm MoM}>0$, $\tilde f=\hat f_{\rm MoM}/\int\hat f_{\rm MoM}$. (Humbert et al.~\cite{humbert2022robust} write the kernel as $\Ker((X_i-x)/h)$; for symmetric kernels, $\Ker((x-X)/h)=\Ker((X-x)/h)$.) We assume:
\begin{enumerate}[({K}1)]
	\item $\Ker\ge0$, $\int\Ker=1$, $\|\Ker\|_\infty<\infty$, $|\Ker(u)-\Ker(v)|\le L_\Ker\|u-v\|_\infty$, and $\int\Ker(u)\|u\|^\beta du<\infty$;
	\item $f$ belongs to the H\"older class $\Sigma(\beta,L)$, $0<\beta\le1$: $|f(x)-f(x')|\le L\|x-x'\|^\beta$.
\end{enumerate}
Assumption~(K1) is satisfied by the Gaussian, Epanechnikov, triangular, and many other kernels. Under~(K1)--(K2) the bias satisfies
\begin{equation}\label{eq:KDEbias}
	\bigl|\E\hat f_l(x)-f(x)\bigr|=\Bigl|\int\Ker(u)\bigl(f(x-hu)-f(x)\bigr)du\Bigr|\le C_2h^\beta,\qquad C_2:=L\int\Ker(u)\|u\|^\beta du,
\end{equation}
by the change of variables $y=x-hu$ in $\E\Ker_h(x-X_1)=\int\Ker_h(x-y)f(y)\,dy$.

\begin{theorem}[$L^\infty$ error of the MoM-KDE under $\alpha$-mixing]\label{thm:KDE}
	Assume (K1)--(K2), let $D\subset\R^d$ be compact, $\delta>0$, $h>0$, and suppose that at most $n_o$ blocks are corrupted. Let
	\[
	\eps_{\delta,B,h}:=N\Bigl(D,\frac{\delta h^{d+1}}{8L_\Ker}\Bigr)\,\frac{4\|\Ker\|_\infty^2\,(1+8A_B)}{B\,h^{2d}\,\delta^2},
	\qquad
	\Delta:=\frac12-\frac{n_o}{K}-\eps_{\delta,B,h}.
	\]
	If $\Delta>0$, then for every integer $1<q\le\frac27K\Delta$,
	\[
	\P^*\Bigl(\sup_{x\in D}\bigl|\hat f_{\rm MoM}(x)-f(x)\bigr|>\delta+C_2h^\beta\Bigr)\le\Rb_{K,q}\bigl(\Delta,\alpha(qB)\bigr).
	\]
\end{theorem}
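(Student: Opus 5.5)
The plan is to separate the deterministic bias from the stochastic part, reduce the stochastic part to the MoM process over a Lipschitz-parametrized class, and then invoke Corollary~\ref{cor:contamination}. This mirrors the treatment of $\hat f$ in the proof of Theorem~\ref{thm:NW}.

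\emph{Bias reduction.} Write $f_h(x):=\E\hat f_l(x)=\E\Ker_h(x-X_1)$. This does not depend on $l$, since the $X_t$ are identically distributed. By the triangle inequality and \eqref{eq:KDEbias},
\[
\sup_{x\in D}|\hat f_{\rm MoM}(x)-f(x)|\le\sup_{x\in D}|\hat f_{\rm MoM}(x)-f_h(x)|+C_2h^\beta .
\]
Hence the event in the statement is contained in $\{\sup_{x\in D}|\hat f_{\rm MoM}(x)-f_h(x)|>\delta\}$, which in turn is contained in the event with $\ge\delta$. The median is translation equivariant, so $\hat f_{\rm MoM}(x)-f_h(x)=\med(\mu_0(f_x),\ldots,\mu_{K-1}(f_x))$, where $f_x(u)=\Ker_h(x-u)$. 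This is the MoM process of the class $\C=\{f_x:x\in D\}$, computed from the possibly corrupted sample.

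\emph{Single-block bound.} The class is Lipschitz in $x$ with constant $\Lambda\equiv L_\Ker h^{-d-1}$, because $|\Ker_h(x-u)-\Ker_h(x'-u)|\le h^{-d}L_\Ker\|x-x'\|_\infty/h$. Moreover $D$ is compact, hence totally bounded in $\|\cdot\|_\infty$. So Proposition~\ref{prop:Lipf} applies: $\C$ satisfies~\eqref{eq:S}. Take $\eta=\delta h^{d+1}/(8L_\Ker)$. Then $4L\eta=\delta/2<\delta$, so the Markov term drops and
\[
\P\Bigl(\sup_x|\mu_l(f_x)|\ge\delta\Bigr)\le N\Bigl(D,\tfrac{\delta h^{d+1}}{8L_\Ker}\Bigr)\sup_x\P(|\mu_l(f_x)|\ge\delta/2).
\]
Each $f_x$ is bounded by $a=h^{-d}\|\Ker\|_\infty$. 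Lemma~\ref{lem:block}(ii) at level $\delta/2$ then gives $\P(|\mu_l(f_x)|\ge\delta/2)\le 4a^2(1+8A_B)/(B\delta^2)$, uniformly in $l$ and $x$. Multiplying by the covering number yields exactly $\eps_{\delta,B,h}$.

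\emph{Conclusion.} With $\eps_{\delta,B}:=\eps_{\delta,B,h}$, Corollary~\ref{cor:contamination} gives, for every integer $1<q\le\frac27K\Delta$,
\[
\P^*\bigl(\sup_x|\hat f_{\rm MoM}(x)-f_h(x)|\ge\delta\bigr)\le\Rb_{K,q}(\Delta,\alpha(qB)),
\]
where $\Delta_o=\Delta$. Monotonicity of outer probability under inclusion then finishes the proof.

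The only delicate point is bookkeeping. The mixing coefficients entering Corollary~\ref{cor:contamination} are those of the underlying sequence, here $\alpha$ of $(X_t)$. The index shift ($t\ge1$ versus blocks $lB+j$) is harmless. The constants must match the statement exactly, in particular the factor $4=2^2$ coming from level $\delta/2$. Beyond that, the argument is a direct assembly of earlier results.
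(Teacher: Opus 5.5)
Your proposal is correct and matches the paper's proof step for step. You remove the bias via \eqref{eq:KDEbias} and translation equivariance of the median, then apply Proposition~\ref{prop:Lipf} to the class $\{y\mapsto\Ker_h(x-y):x\in D\}$ with $\eta=\delta h^{d+1}/(8L_\Ker)$, so the Lipschitz term vanishes. Lemma~\ref{lem:block}(ii) at level $\delta/2$ with $a=h^{-d}\|\Ker\|_\infty$ then gives $\eps_{\delta,B,h}$, and Corollary~\ref{cor:contamination} concludes the argument.
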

\begin{proof}
	By~\eqref{eq:KDEbias} and the translation equivariance of the median, the event is contained in
	\[
	\Bigl\{\sup_{x\in D}\bigl|\hat f_{\rm MoM}(x)-\E\hat f_0(x)\bigr|>\delta\Bigr\},
	\]
	i.e.\ in the event that the MoM process of the class $\{y\mapsto\Ker_h(x-y):x\in D\}$ exceeds $\delta$. This class is bounded by $a=h^{-d}\|\Ker\|_\infty$ and Lipschitz in $x$ with the constant $L_\Ker h^{-d-1}$; for $\eta=\delta h^{d+1}/(8L_\Ker)$ the Lipschitz term of Proposition~\ref{prop:Lipf} vanishes, and Lemma~\ref{lem:block}(ii) at level $\delta/2$ gives the single-block bound $\eps_{\delta,B,h}$. Corollary~\ref{cor:contamination} completes the proof.
\end{proof}

\begin{corollary}\label{cor:KDE}
	Assume, in addition, that $\alpha(k)\le c_0\rho^k$ for some $\rho\in(0,1)$, and let $D=[-R,R]^d$. For fixed $\delta,h$ choose
	\[
	B\ge B(\delta,h):=\frac{16\|\Ker\|_\infty^2(1+8A_\infty)}{h^{2d}\delta^2}\Bigl\lceil\frac{8L_\Ker R}{\delta h^{d+1}}\Bigr\rceil^d,
	\]
	so that $\eps_{\delta,B,h}\le\frac14$. If $n_o\le K/8$, then $\Delta\ge\frac18$ and, with $q=\lfloor\sqrt{K/B}\rfloor$,
	\[
	\P^*\Bigl(\sup_{x\in D}|\hat f_{\rm MoM}(x)-f(x)|>\delta+C_2h^\beta\Bigr)=O\bigl(\kappa^{\sqrt n}\bigr),\qquad n=KB,\ K\to\infty,\ B\ge B(\delta,h)\text{ fixed},
	\]
	for some $\kappa\in(0,1)$. Thus, as in~\cite{humbert2022robust}, the error bound depends on the outliers only through their number, and not on their type.
\end{corollary}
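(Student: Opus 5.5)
The plan is to specialize Theorem~\ref{thm:KDE}, in three steps: bound $\eps_{\delta,B,h}$, bound $\Delta$ from below, and estimate $\Rb_{K,q}$ with $q=\lfloor\sqrt{K/B}\rfloor$.

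\emph{Step 1 (the block size).} For $D=[-R,R]^d$ we have $N(D,\eta)\le\lceil R/\eta\rceil^d$. With $\eta=\delta h^{d+1}/(8L_\Ker)$ this gives
\[
N\Bigl(D,\frac{\delta h^{d+1}}{8L_\Ker}\Bigr)\le\Bigl\lceil\frac{8L_\Ker R}{\delta h^{d+1}}\Bigr\rceil^d .
\]
Geometric mixing gives $A_B\le A_\infty\le c_0\rho/(1-\rho)<\infty$. Substituting both into the definition of $\eps_{\delta,B,h}$ yields $\eps_{\delta,B,h}\le B(\delta,h)/(4B)\le\frac14$ whenever $B\ge B(\delta,h)$; the factor $16$ in $B(\delta,h)$ against the factor $4$ in $\eps_{\delta,B,h}$ produces exactly this $\frac14$.

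\emph{Step 2 (the margin).} If $n_o\le K/8$, then $\Delta\ge\frac12-\frac18-\frac14=\frac18$. We may assume $\Delta\le\frac12$ as in the theorem. Then Theorem~\ref{thm:KDE} applies for every integer $1<q\le\frac27K\Delta$. Since $\Rb_{K,q}$ is nonincreasing in $\Delta$, the probability in question is at most $\Rb_{K,q}(\frac18,\alpha(qB))$.

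\emph{Step 3 (choice of $q$ and the rate).} Take $q=\lfloor\sqrt{K/B}\rfloor$ with $B$ fixed and $K\to\infty$.
\begin{itemize}
\item[(a)] \emph{Admissibility.} Eventually $q\ge2$, and $q\le\sqrt K\le\frac{2}{56}K\le\frac27K\Delta$ for $K$ large, so this $q$ is allowed.
\item[(b)] \emph{Exponential term.} Since $q\le K$, the logarithm satisfies $\log\bigl(1+\frac{(2/7)\Delta K}{K+q}\bigr)\ge\log(1+\Delta/7)\ge\log(1+\frac1{56})$. The exponent is therefore at least $\frac{K}{56q}\log(1+\frac1{56})\ge c\sqrt{KB}=c\sqrt n$ with $c>0$ depending only on $B$. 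Here we used $K/q\ge\sqrt{KB}$.
\item[(c)] \emph{Mixing term.} We have $\frac{28}{\Delta}\alpha(qB)\le224c_0\rho^{qB}$. Using $qB\ge\sqrt{KB}-B=\sqrt n-B$, this is at most $224c_0\rho^{-B}\rho^{\sqrt n}$.
\end{itemize}
Both terms are $O(\kappa^{\sqrt n})$ with $\kappa=\max(e^{-c},\rho)<1$.

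The final remark, that the bound depends on the outliers only through their number, is then immediate. It follows because $n_o$ enters only via $\Delta$, and Corollary~\ref{cor:contamination} allows arbitrary corruption.

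The only point needing some care is Step 3(a)–(b): checking that the constraint $q\le\frac27K\Delta$ holds and that the lower bound on the logarithm is uniform. This is routine once $\Delta\ge\frac18$ is known.
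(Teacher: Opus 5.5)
Your proposal is correct and follows the route the paper intends: you specialize Theorem~\ref{thm:KDE} using $N(D,\eta)\le\lceil R/\eta\rceil^d$ and $A_B\le A_\infty$ to get $\eps_{\delta,B,h}\le B(\delta,h)/(4B)\le\frac14$, hence $\Delta\ge\frac18$, and then run the geometric-mixing case of Corollary~\ref{cor:specmix} and Remark~\ref{rem:decrate} with $q=\lfloor\sqrt{K/B}\rfloor$, which you work out in more detail than the paper does. One minor point: the constant $c=\frac{1}{56}\log(1+\frac{1}{56})$ in Step~3(b) is absolute and does not depend on $B$; only the prefactor $224c_0\rho^{-B}$ in Step~3(c) does.
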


\begin{remark}
	The block size required in Corollary~\ref{cor:KDE} is far from optimal, because Lemma~\ref{lem:block}(ii) only uses $\|\Ker_h\|_\infty$. If the joint densities of $(X_t,X_{t+k})$, $t,k\ge1$, are bounded by $M_2$ and $\|f\|_\infty<\infty$, then $\var\Ker_h(x-X_1)\le\|\Ker\|_\infty\|f\|_\infty h^{-d}$ and 
	\[
	|\cov(\Ker_h(x-X_1),\Ker_h(x-X_{1+k}))|\le\min\{M_2+\|f\|_\infty^2,\,4\alpha(k)\|\Ker\|_\infty^2h^{-2d}\}; 
	\]
	splitting the sum of covariances at $k\asymp h^{-d}$ (Masry's technique, see~\cite{bosq1998nonparametric}) shows that $\var\hat f_l(x)=O((Bh^d)^{-1})$ under $\alpha(k)=O(k^{-a})$ with $a>2$, and the factor $h^{-2d}$ in $\eps_{\delta,B,h}$ improves to $h^{-d}$. 
	
	Since Theorem~\ref{thm:MoMproc} takes the single-block bound as an input, any sharper single-block bound, e.g.\ one based on the Bernstein inequality for strongly mixing sequences of Merlev\`ede, Peligrad and Rio~\cite{merlevede2009bernstein}, can be inserted directly.
\end{remark}

\begin{proposition}[$L^1$ consistency]\label{prop:KDEL1}
	Let $\mathcal S$ be the set of uncorrupted blocks and assume $K>2n_o$, i.e.\ $|\mathcal S|>K/2$.
	\begin{enumerate}[(i)]
		\item Deterministically, $\|\hat f_{\rm MoM}-f\|_1\le\sum_{l\in\mathcal S}\|\hat f_l-f\|_1$ and, if $\|\hat f_{\rm MoM}-f\|_1<1$ (which implies $\int\hat f_{\rm MoM}>0$), $\|\tilde f-f\|_1\le2\|\hat f_{\rm MoM}-f\|_1$.
		\item If $K$ is fixed, $A_\infty<\infty$, $h\to0$ and $Bh^{2d}\to\infty$, then $\|\tilde f-f\|_1\to0$ in outer probability.
	\end{enumerate}
\end{proposition}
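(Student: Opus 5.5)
For part (i), I would start from a pointwise deterministic inequality for medians. Fix $x$ and let $\mathcal S$ be the set of uncorrupted blocks, with $|\mathcal S|>K/2$. I claim that the lower median $\hat f_{\rm MoM}(x)$ lies between $\min_{l\in\mathcal S}\hat f_l(x)$ and $\max_{l\in\mathcal S}\hat f_l(x)$. Indeed, at least $K/2$ block values are $\le\hat f_{\rm MoM}(x)$. If all uncorrupted values were strictly below it, the values $\ge\hat f_{\rm MoM}(x)$ would all come from the fewer than $K/2$ corrupted blocks, contradicting the median property. The other direction is symmetric.

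Consequently $|\hat f_{\rm MoM}(x)-f(x)|\le\max_{l\in\mathcal S}|\hat f_l(x)-f(x)|\le\sum_{l\in\mathcal S}|\hat f_l(x)-f(x)|$. Integrating over $x$ gives the first claim; measurability in $x$ holds because the lower median is continuous in the block values.

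For the normalization, set $I=\int\hat f_{\rm MoM}$. Then $|I-1|=|\int(\hat f_{\rm MoM}-f)|\le\|\hat f_{\rm MoM}-f\|_1=:e<1$, so $I>0$. Since $\hat f_{\rm MoM}\ge0$, we have $\|\tilde f-\hat f_{\rm MoM}\|_1=|1/I-1|\,I=|1-I|\le e$. The triangle inequality then gives $\|\tilde f-f\|_1\le2e$.

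For part (ii), I would use (i) with $K$ fixed. Since $|\mathcal S|\le K$, it suffices to show $\|\hat f_l-f\|_1\to0$ in probability for each fixed uncorrupted block $l$. Outer probability is needed here only because the set $\mathcal S$ may depend on the sample; the bound of (i) is a finite maximum over subsets, and each term is a measurable function of clean blocks. Splitting $\|\hat f_l-f\|_1\le\|\hat f_l-\E\hat f_l\|_1+\|\E\hat f_l-f\|_1$, the bias term $\|\Ker_h*f-f\|_1\to0$ as $h\to0$ by the standard approximation of the identity in $L^1$, with no smoothness of $f$ needed.

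The main obstacle is the stochastic term in $L^1(\R^d)$ over an unbounded domain. I would first bound the variance pointwise: by Lemma~\ref{lem:block}(ii)'s covariance computation, $\var\hat f_l(x)\le h^{-2d}\|\Ker\|_\infty^2(1+8A_\infty)/B$. This gives $\E\int_{D}|\hat f_l-\E\hat f_l|\le|D|^{1/2}\bigl(\int_D\var\hat f_l\bigr)^{1/2}\le|D|\,\|\Ker\|_\infty(1+8A_\infty)^{1/2}(Bh^{2d})^{-1/2}\to0$ for a large cube $D$. Outside $D$ I would use $\E\int_{D^c}|\hat f_l-\E\hat f_l|\le2\int_{D^c}\Ker_h*f$. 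This tail tends to $\int_{D^c}f$ as $h\to0$, which is small for $D$ large. Choosing $D$ first and then letting $h\to0$ and $Bh^{2d}\to\infty$ gives $\E\|\hat f_l-\E\hat f_l\|_1\to0$. Markov's inequality and a union over the at most $K$ blocks then finish the proof, combined with the normalization bound from (i).
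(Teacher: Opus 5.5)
Your proposal is correct and follows the paper's proof essentially step for step. Part (i) uses the same sandwich of the median between the extreme uncorrupted block values and the same normalization identity $\|\tilde f-\hat f_{\rm MoM}\|_1=|1-\int\hat f_{\rm MoM}|$. Part (ii) uses the same three-way split: the approximate-identity bias, a Cauchy--Schwarz variance bound on a cube, and the tail bound $2\int_{D^c}\Ker_h*f$. Your remark that $\sum_{l\in\mathcal S}\|\hat f_l-f\|_1$ is dominated by the measurable sum over all clean blocks treats the outer-probability issue slightly more carefully than the paper's one-line reduction.
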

\begin{proof}
	(i) Since the uncorrupted blocks form a strict majority, every median lies between 
	the minimum and the maximum of $\hat f_l(x)$ over $l\in\mathcal S$
	(otherwise more than $K/2$ values would lie strictly on one side of it). Hence $|\hat f_{\rm MoM}(x)-f(x)|\le\max_{l\in\mathcal S}|\hat f_l(x)-f(x)|\le\sum_{l\in\mathcal S}|\hat f_l(x)-f(x)|$; integrate. For the normalization, with $c=\int\hat f_{\rm MoM}>0$, $\int|\hat f_{\rm MoM}/c-f|\le|1/c-1|\,c+\int|\hat f_{\rm MoM}-f|=|1-c|+\|\hat f_{\rm MoM}-f\|_1$ and $|1-c|=|\int(f-\hat f_{\rm MoM})|\le\|\hat f_{\rm MoM}-f\|_1$.

	(ii) By (i) it suffices to show $\E\|\hat f_l-f\|_1\to0$ for each uncorrupted $l$. Write $f_h=\E\hat f_l$. Then $\|f_h-f\|_1\to0$ as $h\to0$ ($\Ker_h$ is an approximate identity), and for $Q_R=[-R,R]^d$, by the Cauchy--Schwarz inequality and the variance bound $\var\hat f_l(x)\le h^{-2d}\|\Ker\|_\infty^2(1+8A_\infty)/B$ from the proof of Lemma~\ref{lem:block}(ii),
	\[
	\E\|\hat f_l-f_h\|_1\le(2R)^dh^{-d}\|\Ker\|_\infty\Bigl(\frac{1+8A_\infty}{B}\Bigr)^{1/2}+2\int_{\R^d\setminus Q_R}f_h.
	\]
	The first term tends to $0$ for every fixed $R$, and the second one is small uniformly in $h\le1$ for large $R$.
\end{proof}

\section{Robust estimation of VaR and CVaR}\label{sec:VaR}

Let $(X_t)_{t\ge1}$ be the losses of a portfolio (e.g.\ negative log-returns) observed over time. We assume that the $X_t$ are identically distributed with distribution function $F$, and that the sequence is $\alpha$-mixing with coefficients $\alpha^X(k)$; we write $A_B=\sum_{k=1}^B\alpha^X(k)$. Many standard models of financial returns, such as GARCH and stochastic volatility models, are geometrically $\beta$-mixing, and hence $\alpha$-mixing, under mild conditions~\cite{carrasco2002mixing}; at the same time their marginal distributions are typically heavy tailed, so that only moments of order $r$ below the tail index exist. For a confidence level $\alpha\in(0,1)$ (typically 0.95 or 0.99; not to be confused with the mixing coefficients $\alpha^X(k)$), the VaR and the CVaR are
\[
\VaR_\alpha=q_\alpha=\inf\{t\in\R:F(t)\ge\alpha\},\qquad
\CVaR_\alpha=\frac1{1-\alpha}\int_\alpha^1q_u\,du=\min_{c\in\R}G(c),
\]
where $G(c)=c+\E(X_1-c)_+/(1-\alpha)$; the variational formula is due to Rockafellar and Uryasev~\cite{rockafellar2000optimization,rockafellar2002conditional}, see also Acerbi and Tasche~\cite{acerbi2002coherence}. Nonparametric estimation of both quantities from dependent returns is classical, see e.g.~\cite{chen2005nonparametric,chen2008nonparametric}; most of this theory is asymptotic (consistency, asymptotic normality, mean squared error).

Recently, Mulumudi et al.~\cite{mulumudi2026generalization} obtained minimax-optimal high-probability bounds for CVaR-based learning with heavy-tailed losses, including $\beta$-mixing data with stretched-exponentially decaying coefficients, and a truncated median-of-means estimator that is robust to oblivious adversarial contamination. The results below differ in three respects: dependence is measured by $\alpha$-mixing, which is weaker than $\beta$-mixing and we allow polynomially decaying coefficients; contamination is handled together with this dependence, and the corrupted blocks may be chosen with knowledge of the whole sample; and, for a fixed tolerance, the probability of a larger error decays faster than any power of $n$ under geometric mixing, whereas the dependent-data bound of~\cite{mulumudi2026generalization} holds with probability $1-\delta-n^{-O(1)}$. On the other hand, the accuracy rates obtained here are not optimal.

In risk management one needs bounds of the form 
$\P\bigl(|\widehat{\VaR}_\alpha-\VaR_\alpha|>\eps\bigr)\le p,$
for a prescribed tolerance $\eps$, with explicit $p=p(n,\eps)$, rather than bounds on the mean squared error. The estimators below satisfy such bounds also when the observations in a limited number of blocks are corrupted, e.g.\ by recording errors.
The median-of-means principle delivers exactly such bounds, and in addition protects against corrupted observations (data errors, flash crashes) affecting a limited number of blocks. Throughout this section, at most $n_o$ blocks may be corrupted, and $\P^*$ denotes outer probability.

\subsection{The empirical approach}

Let $F_l(t)=\frac1B\sum_{j=1}^B\ind_{\{X_{lB+j}\le t\}}$ be the empirical distribution function of the $l$-th block, and define the \emph{MoM distribution function} and the \emph{MoM Value-at-Risk}
\[
\hat F(t)=\med\bigl(F_0(t),\ldots,F_{K-1}(t)\bigr),\qquad
\widehat{\VaR}_\alpha=\inf\{t:\hat F(t)\ge\alpha\},
\]
with the lower median. Since the lower median is nondecreasing and continuous in each argument, $\hat F$ is a distribution function: it is nondecreasing and right-continuous, vanishes to the left of the smallest and equals one to the right of the largest observation. Hence the MoM Value-at-Risk is well defined.

\begin{theorem}[MoM Value-at-Risk]\label{thm:VaR}
	Let $\eps>0$ and assume that $F(q_\alpha+\eps)>\alpha$. Put $p_\pm=F(q_\alpha\pm\eps)$, $\eta_+=p_+-\alpha$, $\eta_-=\alpha-p_->0$,
	\[
	\eps_\pm:=\frac{p_\pm(1-p_\pm)+2\sum_{k=1}^B\min\{\alpha^X(k),\,p_\pm(1-p_\pm)\}}{B\eta_\pm^2},\qquad
	\Delta_\pm:=\frac12-\frac{n_o}K-\eps_\pm.
	\]
	If $\Delta_+,\Delta_->0$, then for every integer $1<q\le\frac27K\min(\Delta_+,\Delta_-)$,
	\[
	\P^*\bigl(|\widehat{\VaR}_\alpha-\VaR_\alpha|>\eps\bigr)\le\Rb_{K,q}\bigl(\Delta_+,\alpha^X(qB)\bigr)+\Rb_{K,q}\bigl(\Delta_-,\alpha^X(qB)\bigr).
	\]
	No moment assumption is needed. Without Lemma~\ref{lem:Rio61}, the right-hand side can be replaced by $\sum_\pm\bigl(\frac14+2\sum_{k=0}^{K-1}\alpha^X(kB+1)\bigr)/(K\Delta_\pm^2)$ (Remark~\ref{rem:decrate}, applied to the event $\{\sum_lZ_l\ge\frac K2-n_o\}$).
\end{theorem}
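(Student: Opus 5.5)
We split the event $\{|\widehat{\VaR}_\alpha-q_\alpha|>\eps\}$ into an upper and a lower deviation and reduce each to a single-function MoM event, i.e.\ Corollary~\ref{cor:contamination} with the one-element class $\C=\{\ind_{(-\infty,t]}\}$ (condition~\eqref{eq:S} holds trivially).

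\emph{Upper deviation.} If $\widehat{\VaR}_\alpha>q_\alpha+\eps$, then $\hat F(q_\alpha+\eps)<\alpha$ by the definition of the infimum and right-continuity of $\hat F$. Since $F(q_\alpha+\eps)=p_+=\alpha+\eta_+$, this gives $\hat F(q_\alpha+\eps)-p_+<-\eta_+$. By translation equivariance of the median, $\hat F(t)-F(t)$ is the MoM process $\hat\mu(\ind_{(-\infty,t]})$ at $t=q_\alpha+\eps$. So the event lies in $\{|\hat\mu(\ind_{(-\infty,q_\alpha+\eps]})|\ge\eta_+\}$ (computed from the possibly corrupted sample).

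\emph{Lower deviation.} If $\widehat{\VaR}_\alpha<q_\alpha-\eps$, then there is $t<q_\alpha-\eps$ with $\hat F(t)\ge\alpha$. By monotonicity, $\hat F(q_\alpha-\eps)\ge\alpha$. Since $F(q_\alpha-\eps)=p_-=\alpha-\eta_-$, we get $\hat F(q_\alpha-\eps)-p_-\ge\eta_-$. Here $\eta_->0$ holds automatically, because $F(t)<\alpha$ for $t<q_\alpha$. Thus the event lies in $\{|\hat\mu(\ind_{(-\infty,q_\alpha-\eps]})|\ge\eta_-\}$.

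\emph{Single-block bounds.} Lemma~\ref{lem:block}(i) with $S=(-\infty,q_\alpha\pm\eps]$, $p=p_\pm$ and level $\eta_\pm$ gives $\P(|\mu_l(\ind_S)|\ge\eta_\pm)\le\eps_\pm$ exactly as defined, uniformly in $l$, using only identical distribution. Corollary~\ref{cor:contamination} with $\delta=\eta_\pm$ and $\eps_{\delta,B}=\eps_\pm$ then yields, for $1<q\le\frac27K\Delta_\pm$, the bound $\Rb_{K,q}(\Delta_\pm,\alpha^X(qB))$ for each piece. A union bound finishes the proof, with outer probability handling the corruption. No moments enter, since the functions are indicators.

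\emph{Variant without Lemma~\ref{lem:Rio61}.} Step~2 of Corollary~\ref{cor:contamination} places each event in $\{\sum_lZ_l\ge K/2-n_o\}\subseteq\{|\sum_l(Z_l-\E Z_l)|\ge K\Delta_\pm\}$. Chebyshev's inequality, with $\var Z_l\le\frac14$ and $|\cov(Z_i,Z_j)|\le\alpha^X(|i-j|B-B+1)$ from Step~4, gives $(\frac14+2\sum_{k=0}^{K-1}\alpha^X(kB+1))/(K\Delta_\pm^2)$, matching Remark~\ref{rem:decrate}.

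The only delicate point is the pair of boundary inclusions linking $\widehat{\VaR}$ to $\hat F$ at $q_\alpha\pm\eps$. They need right-continuity of $\hat F$, which holds for the lower median as noted before the theorem, and they must be written with strict versus non-strict inequalities so that the resulting events are of the form $|\hat\mu|\ge\eta_\pm$ required by Corollary~\ref{cor:contamination}.
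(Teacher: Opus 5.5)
Your proposal is correct and follows the paper's proof essentially verbatim. It uses the same quantile-inversion inclusions at $q_\alpha\pm\eps$, then Corollary~\ref{cor:contamination} for a one-element indicator class with the single-block bound from Lemma~\ref{lem:block}(i), and Remark~\ref{rem:decrate} for the Chebyshev variant. One minor point: neither inclusion needs right-continuity of $\hat F$. The upper one uses only that $\hat F(q_\alpha+\eps)\ge\alpha$ would put $q_\alpha+\eps$ in the set defining the infimum, and the lower one uses only monotonicity.
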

\begin{proof}
	\emph{Quantile inversion.} If $\widehat{\VaR}_\alpha>q_\alpha+\eps$, then $\hat F(q_\alpha+\eps)<\alpha$, since otherwise $q_\alpha+\eps$ would belong to the set whose infimum defines $\widehat{\VaR}_\alpha$. If $\widehat{\VaR}_\alpha<q_\alpha-\eps$, there is $t<q_\alpha-\eps$ with $\hat F(t)\ge\alpha$, and by monotonicity $\hat F(q_\alpha-\eps)\ge\alpha$. Therefore
	\[
	\{|\widehat{\VaR}_\alpha-q_\alpha|>\eps\}\subseteq\{|\hat F(q_\alpha+\eps)-F(q_\alpha+\eps)|\ge\eta_+\}\cup\{|\hat F(q_\alpha-\eps)-F(q_\alpha-\eps)|\ge\eta_-\}.
	\]
	\emph{Deviation of $\hat F$.} The difference $\hat F(t)-F(t)$ is the MoM of the block means $\mu_l(\ind_{(-\infty,t]})$, so each event is controlled by Theorem~\ref{thm:MoMproc} and Corollary~\ref{cor:contamination} with a one-element class, the single-block bound being given by Lemma~\ref{lem:block}(i). The polynomial version follows from Remark~\ref{rem:decrate}.
\end{proof}

\begin{remark}[Choice of the block size]\label{rem:VaRblock}
	If $F$ has a positive density $F'$ near $q_\alpha$, then $\eta_\pm\approx F'(q_\alpha)\eps$ and $p_\pm(1-p_\pm)\approx\alpha(1-\alpha)$. For geometrically mixing sequences $\sum_k\min\{\alpha^X(k),v\}=O(v\log(1/v))$, hence $\eps_\pm\le\frac14$ as soon as
	\[
	B\gtrsim\frac{\alpha(1-\alpha)\log\frac1{\alpha(1-\alpha)}}{F'(q_\alpha)^2\,\eps^2},
	\]
	which, up to the logarithmic factor, is the number of observations needed to resolve the quantile with accuracy $\eps$ (compare with the asymptotic variance $\alpha(1-\alpha)/F'(q_\alpha)^2$ of the empirical quantile, see Bahadur~\cite{bahadur1966note}). For $B$ with $\eps_\pm\le\frac14$, the deviation bound decays as $O(\kappa^{\sqrt n})$ under geometric mixing (Remark~\ref{rem:decrate}). For indicators no moment problem arises, and Lemma~\ref{lem:Rio61} applied to the full sample gives bounds of the same type for the empirical quantile of uncontaminated data; the benefit of the MoM construction here is its robustness against corrupted blocks and its compatibility with the CVaR estimators below, for which the median-of-means principle is essential.
\end{remark}

For the Conditional Value-at-Risk, the relevant functions $x\mapsto(x-c)_+$ are unbounded. For heavy-tailed losses the empirical average of $(X_t-c)_+$ only admits polynomial deviation bounds (a single extreme loss moves it by a fixed amount with polynomially small probability), while the median of block means does not suffer from this effect. We use the MoM version of the Rockafellar--Uryasev objective,
\[
\hat G(c)=c+\frac{1}{1-\alpha}\,\med_l\Bigl(\frac1B\sum_{j=1}^B(X_{lB+j}-c)_+\Bigr),
\]
with the lower median, so that $\hat G$ is continuous.

\begin{lemma}[Rockafellar--Uryasev]\label{lem:RU}
	Assume $\E|X_1|<\infty$ and let $\xi$ be any $\alpha$-quantile, i.e.\ $\P(X_1<\xi)\le\alpha\le\P(X_1\le \xi)$. Then for every $c\in\R$,
	\[
	0\le G(c)-G(\xi)\le\frac{|c-\xi|\,|F(c)-\alpha|}{1-\alpha}.
	\]
	In particular, $\min_cG(c)=G(q_\alpha)=\CVaR_\alpha$, and if $|F(c)-\alpha|\le L|c-q_\alpha|$ near $q_\alpha$, then $G(c)-G(q_\alpha)\le L(c-q_\alpha)^2/(1-\alpha)$.
\end{lemma}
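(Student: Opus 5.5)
The plan is to write $G(c)-G(\xi)$ as an integral of $F-\alpha$ and then read off both inequalities directly. With $\E|X_1|<\infty$, the function $c\mapsto\E(X_1-c)_+=\int_c^\infty(1-F(t))\,dt$ is finite. Hence, for $c>\xi$,
\[
G(c)-G(\xi)=(c-\xi)-\frac{1}{1-\alpha}\int_\xi^c(1-F(t))\,dt=\frac{1}{1-\alpha}\int_\xi^c\bigl(F(t)-\alpha\bigr)\,dt .
\]
For $c<\xi$ the same computation gives $G(c)-G(\xi)=\frac1{1-\alpha}\int_c^\xi(\alpha-F(t))\,dt$. This identity uses only Fubini (or the layer-cake formula) and is the single substantive step.

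\emph{Lower bound.} Let $c>\xi$. For $t>\xi$ we have $F(t)\ge F(\xi)=\P(X_1\le\xi)\ge\alpha$, so the integrand is nonnegative. Now let $c<\xi$. For $t<\xi$ we have $F(t)\le\P(X_1<\xi)\le\alpha$, so again the integrand is nonnegative. Hence $G(c)\ge G(\xi)$.

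\emph{Upper bound.} By monotonicity of $F$, for $t$ between $\xi$ and $c$ the quantity $|F(t)-\alpha|$ is at most its value at the endpoint $c$. Indeed, for $c>\xi$ and $\xi<t\le c$ we have $0\le F(t)-\alpha\le F(c)-\alpha$. For $c<\xi$ and $c\le t<\xi$ we have $0\le\alpha-F(t)\le\alpha-F(c)$, because $F(t)\ge F(c)$. Bounding the integrand by this endpoint value over an interval of length $|c-\xi|$ gives the claimed $|c-\xi|\,|F(c)-\alpha|/(1-\alpha)$.

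\emph{Consequences.} Since $q_\alpha$ is an $\alpha$-quantile, $G(q_\alpha)=\min_c G(c)$, and the quadratic bound follows by substituting $|F(c)-\alpha|\le L|c-q_\alpha|$. The identification $G(q_\alpha)=\CVaR_\alpha$ is the step requiring the most care. I would use the quantile representation $X_1\overset{d}{=}q_U$ with $U$ uniform on $(0,1)$, which gives
\[
\E(X_1-q_\alpha)_+=\int_0^1(q_u-q_\alpha)_+\,du=\int_\alpha^1(q_u-q_\alpha)\,du .
\]
The last equality holds because $q_u\ge q_\alpha$ for $u\ge\alpha$ and $q_u\le q_\alpha$ for $u<\alpha$. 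Therefore
\[
G(q_\alpha)=q_\alpha+\frac1{1-\alpha}\int_\alpha^1 q_u\,du-q_\alpha=\CVaR_\alpha .
\]
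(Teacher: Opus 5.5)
Your proof is correct, but it takes a different route from the paper. The paper works pointwise. It uses the subgradient inequalities of the convex map $c\mapsto(x-c)_+$, at $\xi$ with subgradients $-\ind_{\{x>\xi\}}$ and $-\ind_{\{x\ge\xi\}}$, and at $c$ with subgradient $-\ind_{\{x>c\}}$. Integrating these gives $\frac{(c-\xi)(\P(X_1\le\xi)-\alpha)}{1-\alpha}\le G(c)-G(\xi)\le\frac{(c-\xi)(F(c)-\alpha)}{1-\alpha}$ for $c\ge\xi$, and the analogue with $\P(X_1<\xi)$ for $c\le\xi$. You instead integrate first: the layer-cake formula gives the exact identity $G(c)-G(\xi)=\frac1{1-\alpha}\int_\xi^c(F(t)-\alpha)\,dt$, and bounding the monotone integrand by its endpoint values yields the same sandwich in integrated form.

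The paper's version avoids any Fubini argument and needs only a pointwise inequality. Yours is more transparent, since it exhibits $(F-\alpha)/(1-\alpha)$ as the right derivative of the convex function $G$. It would also immediately give a sharper constant: integrating $|F(t)-\alpha|\le L|t-q_\alpha|$ over the interval yields $G(c)-G(q_\alpha)\le L(c-q_\alpha)^2/(2(1-\alpha))$, a factor $2$ better than the stated bound.

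Finally, the paper does not prove $G(q_\alpha)=\CVaR_\alpha$ but cites it as classical. Your quantile-transform computation supplies a short self-contained proof. It uses that $u\mapsto q_u$ is nondecreasing and that $\int_0^1|q_u|\,du=\E|X_1|<\infty$.
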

\begin{proof}
	The function $c\mapsto(x-c)_+$ is convex, with subgradients $-\ind_{\{x>\xi\}}$ and $-\ind_{\{x\ge \xi\}}$ at $\xi$ and $-\ind_{\{x>c\}}$ at $c$. Hence 
	\begin{equation*}
		(x-c)_+\ge(x-\xi)_+-(c-\xi)\ind_{\{x>\xi\}}
		\quad\text{and}\quad
		(x-c)_+\ge(x-\xi)_++(\xi-c)\ind_{\{x\ge \xi\}},
	\end{equation*}
	which after integration give 
	\[
	G(c)-G(\xi)\ge(c-\xi)(\P(X_1\le \xi)-\alpha)/(1-\alpha)\ge0
	\] 
	for $c\ge \xi$ and 
	\[
	G(c)-G(\xi)\ge(\xi-c)(\alpha-\P(X_1<\xi))/(1-\alpha)\ge0
	\] for $c\le \xi$. Similarly, $(x-c)_+\le(x-\xi)_+-(c-\xi)\ind_{\{x>c\}}$ yields 
	\[G(c)-G(\xi)\le(c-\xi)(F(c)-\alpha)/(1-\alpha).\] 
	The identification of $G(q_\alpha)$ with $\frac1{1-\alpha}\int_\alpha^1q_u\,du$ is classical~\cite{rockafellar2002conditional,acerbi2002coherence}.
\end{proof}

\begin{theorem}[Empirical MoM-CVaR]\label{thm:CVaR}
	Assume $\E|X_1|<\infty$, let $a\le q_\alpha\le b$, and set $\widehat{\CVaR}_\alpha:=\min_{c\in[a,b]}\hat G(c)$. Let $\delta>0$ and $M\ge0$, let $N>4(b-a)/\delta$ be an integer, and define
	\[
	\eps^{\rm CV}_{\delta,B}:=(N+1)\Bigl[\frac{16M^2(1+8A_B)}{B\delta^2}+\frac{8\,\E\bigl(|X_1|+\max(|a|,|b|)-M\bigr)_+}{\delta}\Bigr],
	\qquad\Delta:=\frac12-\frac{n_o}K-\eps^{\rm CV}_{\delta,B}.
	\]
	If $\Delta>0$, then for every integer $1<q\le\frac27K\Delta$,
	\[
	\P^*\Bigl(\bigl|\widehat{\CVaR}_\alpha-\CVaR_\alpha\bigr|\ge\frac{\delta}{1-\alpha}\Bigr)\le\Rb_{K,q}\bigl(\Delta,\alpha^X(qB)\bigr).
	\]
\end{theorem}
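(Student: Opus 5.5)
We reduce the claim to a uniform deviation of the MoM process over the class $\C=\{x\mapsto(x-c)_+:c\in[a,b]\}$ and then invoke Corollary~\ref{cor:contamination}. Write $\hat H(c):=\med_l\bigl(\frac1B\sum_j(X_{lB+j}-c)_+\bigr)$ and $H(c):=\E(X_1-c)_+$. Then $\hat G(c)-G(c)=(\hat H(c)-H(c))/(1-\alpha)$, and by translation equivariance of the median $\hat H(c)-H(c)$ is exactly the MoM process $\hat\mu(f_c)$ for $f_c(x)=(x-c)_+$.

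\emph{Deterministic step.} On the event $\sup_{c\in[a,b]}|\hat\mu(f_c)|<\delta$ we have $|\hat G(c)-G(c)|<\delta/(1-\alpha)$ uniformly on $[a,b]$. Since $q_\alpha\in[a,b]$, Lemma~\ref{lem:RU} gives $\min_{[a,b]}G=G(q_\alpha)=\CVaR_\alpha$. Hence
\[
|\widehat{\CVaR}_\alpha-\CVaR_\alpha|=\bigl|\min_{[a,b]}\hat G-\min_{[a,b]}G\bigr|\le\sup_{[a,b]}|\hat G-G|<\delta/(1-\alpha).
\]
This holds because the minimum is $1$-Lipschitz with respect to the sup norm, and both minima are attained because $\hat G$ is continuous (lower median) and $[a,b]$ is compact. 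Therefore the event in the statement is contained in $\{\sup_{c}|\hat\mu^o(f_c)|\ge\delta\}$, and Corollary~\ref{cor:contamination} yields the bound $\Rb_{K,q}(\Delta,\alpha^X(qB))$, provided $\eps^{\rm CV}_{\delta,B}$ bounds the single-block probabilities $\P(\sup_c|\mu_l(f_c)|\ge\delta)$.

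\emph{Single-block step.} We apply Proposition~\ref{prop:Lipf} with $\Theta=[a,b]$, $d(c,c')=|c-c'|$ and constant $\Lambda\equiv1$, since $|(x-c)_+-(x-c')_+|\le|c-c'|$. Condition~\eqref{eq:S} then holds, and $\E|f_c(X_1)|<\infty$ because $\E|X_1|<\infty$. Take the grid $c_i=a+i(b-a)/N$, $i=0,\dots,N$. It is an $\eta$-net with $\eta=(b-a)/(2N)$, or with $\eta=(b-a)/N$ if one uses grid points directly; in either case $4\eta<\delta$ by $N>4(b-a)/\delta$, so the Lipschitz term is omitted and there are $N+1$ centres. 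For each fixed $c$, apply Lemma~\ref{lem:block}(iii) at level $\delta/2$ to $g=f_c$, using $|f_c(x)|\le|x|+\max(|a|,|b|)$ so that $\E(|f_c(X_1)|-M)_+\le\E(|X_1|+\max(|a|,|b|)-M)_+$. This gives the bracket $\frac{16M^2(1+8A_B)}{B\delta^2}+\frac{8\E(\cdots)_+}{\delta}$, and multiplying by $N+1$ yields $\eps^{\rm CV}_{\delta,B}$.

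The only delicate point is the net bookkeeping: checking that the count $N+1$ and the condition $N>4(b-a)/\delta$ match the hypothesis $4L\eta<\delta$ of Proposition~\ref{prop:Lipf} with $L=1$. Using the grid points themselves as centres, with $\eta=(b-a)/N<\delta/4$, settles this. A secondary point is measurability of $\widehat{\CVaR}_\alpha$, which is avoided by working with outer probability throughout.
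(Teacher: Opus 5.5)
Your proof is correct and follows the paper's argument essentially step for step. You reduce, via Lemma~\ref{lem:RU} and the fact that the minimum is $1$-Lipschitz in the sup norm, to the MoM process of $\{x\mapsto(x-c)_+:c\in[a,b]\}$; you then apply Proposition~\ref{prop:Lipf} with $\Lambda\equiv1$ and the $(b-a)/N$-grid, Lemma~\ref{lem:block}(iii) at level $\delta/2$, and Corollary~\ref{cor:contamination}, exactly as the paper does.
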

\begin{proof}
	By Lemma~\ref{lem:RU}, $\CVaR_\alpha=\min_{[a,b]}G$. For functions with $|\hat G-G|\le s$ on $[a,b]$ the minima differ by at most $s$, and $\hat G(c)-G(c)$ is $(1-\alpha)^{-1}$ times the MoM process of the class $\{x\mapsto(x-c)_+:c\in[a,b]\}$. This class is Lipschitz in $c$ with $\Lambda\equiv1$, hence satisfies~\eqref{eq:S}, and $|(x-c)_+|\le|x|+\max(|a|,|b|)$. With the $\frac{b-a}N$-net $\{a+k(b-a)/N\}_{k=0}^N$ the Lipschitz term of Proposition~\ref{prop:Lipf} vanishes (as $4(b-a)/N<\delta$), and Lemma~\ref{lem:block}(iii) at level $\delta/2$ gives $\eps^{\rm CV}_{\delta,B}$. Corollary~\ref{cor:contamination} completes the proof.
\end{proof}

If $\E|X_1|^r<\infty$ for some $r>1$, then, with $C:=\max(|a|,|b|)$, $\E(|X_1|+C-M)_+\le\E(|X_1|+C)^rM^{1-r}$, and $\eps^{\rm CV}_{\delta,B}\le\frac14$ for $B\gtrsim(N\delta^{-2r/(r+1)})^{(r+1)/(r-1)}$: the median of block means produces deviation bounds of order $\kappa^{\sqrt n}$ under geometric mixing (Remark~\ref{rem:decrate}), although the losses have only $r$ moments.

\subsection{A Bahadur-type approach}

The classical Bahadur representation~\cite{bahadur1966note} linearizes the sample quantile around $q_\alpha$. Its analogue for CVaR is the fact that $G'(q_\alpha)=0$: plugging a a consistent estimate of the VaR into the Rockafellar--Uryasev objective does not affect the first-order error. This suggests the plug-in estimator
\[
\widetilde{\CVaR}_\alpha:=\hat G\bigl(\widehat{\VaR}_\alpha\bigr),
\]
which, unlike the estimator of Theorem~\ref{thm:CVaR}, does not require an a priori interval $[a,b]$ containing $q_\alpha$.

\begin{theorem}[Bahadur-type MoM-CVaR]\label{thm:Bahadur}
	Assume $\E|X_1|<\infty$, $F(q_\alpha)=\alpha$, and $|F(c)-\alpha|\le L|c-q_\alpha|$ for $|c-q_\alpha|\le\eps$. Let $\gamma>0$ and let $\eps^{\rm CV}_{(1-\alpha)\gamma,B}$ be defined as in Theorem~\ref{thm:CVaR} with $[a,b]=[q_\alpha-\eps,q_\alpha+\eps]$ and $\delta=(1-\alpha)\gamma$, and assume that $\Delta_\gamma=\frac12-\frac{n_o}K-\eps^{\rm CV}_{(1-\alpha)\gamma,B}>0$. Then for every integer $1<q\le\frac27K\Delta_\gamma$,
	\[
	\P^*\Bigl(\bigl|\widetilde{\CVaR}_\alpha-\CVaR_\alpha\bigr|\ge\gamma+\frac{L\eps^2}{1-\alpha}\Bigr)
	\le\P^*\bigl(|\widehat{\VaR}_\alpha-q_\alpha|>\eps\bigr)+\Rb_{K,q}\bigl(\Delta_\gamma,\alpha^X(qB)\bigr),
	\]
	and the first term on the right-hand side is bounded by Theorem~\ref{thm:VaR} if $F(q_\alpha+\eps)>\alpha$. The statement holds for any pilot estimator in place of $\widehat{\VaR}_\alpha$.
\end{theorem}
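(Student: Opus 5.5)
The argument splits the error at the pilot estimate $\hat c:=\widehat{\VaR}_\alpha$ into a stochastic and a deterministic part:
\[
\widetilde{\CVaR}_\alpha-\CVaR_\alpha=\bigl(\hat G(\hat c)-G(\hat c)\bigr)+\bigl(G(\hat c)-G(q_\alpha)\bigr),
\]
using $\CVaR_\alpha=G(q_\alpha)$ from Lemma~\ref{lem:RU}. On the event $E:=\{|\hat c-q_\alpha|\le\eps\}$ both terms are controlled. Off $E$ we simply pay $\P^*(|\widehat{\VaR}_\alpha-q_\alpha|>\eps)$. Since only $\hat c\in[a,b]:=[q_\alpha-\eps,q_\alpha+\eps]$ is used, the argument does not depend on how the pilot was constructed, which gives the last sentence of the statement.

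\emph{Deterministic (bias) term.} The hypothesis $F(q_\alpha)=\alpha$ makes $q_\alpha$ an $\alpha$-quantile in the sense of Lemma~\ref{lem:RU}. The lemma with $\xi=q_\alpha$, combined with the Lipschitz condition $|F(c)-\alpha|\le L|c-q_\alpha|$ on $[a,b]$, gives
\[
0\le G(\hat c)-G(q_\alpha)\le \frac{L(\hat c-q_\alpha)^2}{1-\alpha}\le\frac{L\eps^2}{1-\alpha}
\]
on $E$.

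\emph{Stochastic term.} On $E$ we have
\[
|\hat G(\hat c)-G(\hat c)|\le\sup_{c\in[a,b]}|\hat G(c)-G(c)|=\frac1{1-\alpha}\sup_{c\in[a,b]}|\hat\mu^o(f_c)|,
\]
where $f_c(x)=(x-c)_+$ and $\hat\mu^o$ is the (possibly corrupted) MoM process. This step uses translation equivariance of the lower median, exactly as in the proof of Theorem~\ref{thm:CVaR}. That proof also shows that the class $\{f_c:c\in[a,b]\}$ satisfies~\eqref{eq:S} and is Lipschitz with $\Lambda\equiv1$. Proposition~\ref{prop:Lipf} with the net $\{a+k(b-a)/N\}$, together with Lemma~\ref{lem:block}(iii) at level $\delta/2$ and $\delta=(1-\alpha)\gamma$, gives the single-block bound $\eps^{\rm CV}_{(1-\alpha)\gamma,B}$. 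Corollary~\ref{cor:contamination} then yields
\[
\P^*\Bigl(\sup_{[a,b]}|\hat G-G|\ge\gamma\Bigr)\le\Rb_{K,q}\bigl(\Delta_\gamma,\alpha^X(qB)\bigr)
\quad\text{for } 1<q\le\tfrac27K\Delta_\gamma .
\]

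\emph{Combination.} On $E\cap\{\sup_{[a,b]}|\hat G-G|<\gamma\}$ the triangle inequality gives
\[
|\widetilde{\CVaR}_\alpha-\CVaR_\alpha|<\gamma+\frac{L\eps^2}{1-\alpha}.
\]
The target event is therefore contained in $E^c\cup\{\sup_{[a,b]}|\hat G-G|\ge\gamma\}$, and subadditivity of outer probability gives the claim. If $F(q_\alpha+\eps)>\alpha$, Theorem~\ref{thm:VaR} bounds the first term.

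\emph{Main obstacle.} The main subtlety is that $\hat c$ is random and depends on the same (corrupted) sample, so one cannot evaluate a fixed-$c$ deviation bound at $\hat c$. This is why the argument passes to the uniform supremum over $[a,b]$. Measurability is handled by working with outer probabilities throughout, as in Corollary~\ref{cor:contamination}, which requires no measurability of $\hat c$ or of the corrupted data.
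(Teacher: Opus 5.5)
Your proposal is correct and follows the paper's proof: you split the error at the pilot into $\hat G(\hat c)-G(\hat c)$, which you control uniformly over $[q_\alpha-\eps,q_\alpha+\eps]$ via the argument of Theorem~\ref{thm:CVaR} and Corollary~\ref{cor:contamination}, and $G(\hat c)-G(q_\alpha)$, which you control by Lemma~\ref{lem:RU} together with the Lipschitz condition, and you conclude by subadditivity of outer probability. The details you add, namely that $q_\alpha$ is an $\alpha$-quantile and that passing to the uniform supremum handles the dependence of $\hat c$ on the sample, are what the paper's short argument leaves implicit.
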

\begin{proof}
	On the event $\{|\widehat{\VaR}_\alpha-q_\alpha|\le\eps\}\cap\{\sup_{|c-q_\alpha|\le\eps}|\hat G(c)-G(c)|<\gamma\}$, Lemma~\ref{lem:RU} gives
	\[
	|\hat G(\widehat{\VaR}_\alpha)-G(q_\alpha)|\le|\hat G(\widehat{\VaR}_\alpha)-G(\widehat{\VaR}_\alpha)|+G(\widehat{\VaR}_\alpha)-G(q_\alpha)<\gamma+\frac{L\eps^2}{1-\alpha}.
	\]
	The probability of the complement of the second event is bounded as in the proof of Theorem~\ref{thm:CVaR}.
\end{proof}

Thus the error of the VaR pilot enters the deterministic part of the bound only quadratically: an error of order $\eps$ in $\widehat{\VaR}_\alpha$ costs $O(\eps^2)$ in $\widetilde{\CVaR}_\alpha$, and $\widetilde{\CVaR}_\alpha$ behaves, to first order, like the oracle estimator $\hat G(q_\alpha)$.

\begin{remark}[Choice of the parameters]
	Given a tolerance $\eps$ (or $\delta$) and an admissible risk $p$: (1) choose $B$ such that the single-block bound $\eps_\pm$ (resp.\ $\eps^{\rm CV}$) is at most $\frac14$, using a conservative upper bound for the mixing coefficients of the return model; (2) with $K=n/B$ blocks and $n_o\le K/8$ suspected corrupted blocks, $\Delta\ge\frac18$; (3) choose $q$ as in Corollary~\ref{cor:specmix} and read off $p$ from $\Rb_{K,q}(\Delta,\alpha^X(qB))$; conversely, the smallest $n$ achieving a prescribed $p$ is obtained by solving for $K$.
\end{remark}

\section{Sampling of bulk materials: reduction to one dimension}\label{sec:spatial}

The quality of bulk particulate materials, such as cement, coal, mineral ores or grain, is assessed by analyzing samples taken from a large lot, e.g.\ a heap or a stockpile. The measured property (typically the concentration of an analyte) varies throughout the lot: measurements taken at nearby points are dependent, and the dependence fades with distance. The distribution of the measurements may be heavy tailed, e.g.\ because of rare grains of a valuable or harmful constituent, and some samples may be grossly wrong because of foreign inclusions, contamination during extraction or faulty measurements. Finally, the guarantee needed in practice is of the form 
\[
\P(|\hat\mu-\mu|\ge\delta)\le p
\] 
for a prescribed tolerance $\delta$ and risk $p$, e.g.\ when a delivered lot is accepted or rejected on the basis of an estimate $\hat\mu$ of the mean grade $\mu$ of the material.
For an overview of the theory of sampling particulate materials, see~\cite{gy1998sampling,pitard2019theory,minkkinen2004practical}.

In this section we show that, although the measurements form a random field rather than a sequence, the median-of-means estimator of $\mu$ can be analyzed by the one-dimensional results of Section~\ref{sec:main}, provided the samples are taken in well-separated groups.

\paragraph{Setting.}
Let $(\mathbb H,d)$ be a metric space describing the lot (e.g.\ a bounded subset of $\R^3$), and let $C(x)$, $x\in\mathbb H$, be a real-valued random field, $C(x)$ being the result of a measurement taken at the point $x$. We assume that the variables $C(x)$, $x\in\mathbb H$, are identically distributed (not necessarily independent) and integrable, and we wish to estimate $\mu:=\E C(x)$. Identical distribution excludes systematic trends such as segregation; the results below use it only through the common mean $\mu$ and the single-group bounds. For $D\ge0$ and $N\in\N$ we define the mixing coefficients of the field by
\begin{equation}\label{eq:spatial_mixing}
	\alpha^C_N(D):=\sup_{A,A'}\alpha\bigl(\sigma(C(x):x\in A),\,\sigma(C(x):x\in A')\bigr),
\end{equation}
where the supremum is taken over all finite sets $A,A'\subseteq\mathbb H$ with $|A|+|A'|\le N$ and $d(a,a')\ge D$ for all $a\in A$, $a'\in A'$. Clearly, $\alpha^C_N(D)$ is nonincreasing in $D$ and nondecreasing in $N$.

A total of $n=KB$ increments are taken at the points $x_{l,j}\in\mathbb H$, $l=0,\ldots,K-1$, $j=1,\ldots,B$. The $l$-th \emph{group} (portions of material collected by one operation of the sampling device) consists of the $B$ increments $x_{l,1},\ldots,x_{l,B}$, taken in a small neighborhood of a chosen location, and increments belonging to different groups are at least $D$ apart:
\begin{equation}\label{eq:spatial_gap}
	d(x_{l,j},x_{l',j'})\ge D\qquad\text{whenever } l\ne l'.
\end{equation}
The estimator is the median of the group averages,
\[
\hat\mu=\med(\mu_0,\ldots,\mu_{K-1}),\qquad \mu_l=\frac1B\sum_{j=1}^BC(x_{l,j}),
\]
where, as in Corollary~\ref{cor:contamination}, the measurements of at most $n_o$ groups may be replaced by arbitrary values.

\paragraph{Serialization.}
We list the measurements group by group:
\begin{equation}\label{eq:serial}
	Y_{lB+j}:=C(x_{l,j}),\qquad l=0,\ldots,K-1,\ j=1,\ldots,B.
\end{equation}
Then $\mu_l=\frac1B\sum_{j=1}^BY_{lB+j}$ are exactly the block averages~\eqref{eq:movAvg} of the sequence $Y_1,\ldots,Y_n$, and $\hat\mu$ is its median-of-means estimator. The order of the increments within a group is irrelevant, and so is the order of the groups. For the finite sequence $Y_1,\ldots,Y_n$, the mixing coefficients~\eqref{eq:mix_coef} are computed with the $\sigma$-algebras $\sigma(Y_i:i\le k)$ and $\sigma(Y_i:k+m\le i\le n)$, $1\le k<n$.

\begin{proposition}[One-dimensional reduction]\label{prop:serial}
	Under~\eqref{eq:spatial_gap}, the sequence~\eqref{eq:serial} satisfies
	\[
	\alpha^Y(m)\le\alpha^C_n(D)\qquad\text{for every } m\ge B,
	\]
	and $\alpha\bigl(\sigma(Y_{lB+1},\ldots,Y_{lB+B}),\,\sigma(Y_{l'B+1},\ldots,Y_{l'B+B})\bigr)\le\alpha^C_{2B}(D)$ for $l\ne l'$.
\end{proposition}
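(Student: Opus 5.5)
The plan is to check directly that, for the serialized sequence, the two $\sigma$-algebras entering $\alpha^Y(m)$ are generated by field values at two finite point sets that fit the definition~\eqref{eq:spatial_mixing}. Fix $1\le k<n$ and $m\ge B$. The past $\sigma$-algebra is $\sigma(Y_i:i\le k)=\sigma(C(x):x\in A)$ with $A=\{x_{l,j}:lB+j\le k\}$. The future $\sigma$-algebra is $\sigma(Y_i:k+m\le i\le n)=\sigma(C(x):x\in A')$ with $A'=\{x_{l',j'}:k+m\le l'B+j'\le n\}$. Both sets are finite, and since $i\le k<k+m\le i'$ the two index sets are disjoint subsets of $\{1,\ldots,n\}$. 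Hence $|A|+|A'|\le n$. (If $A'$ is empty, the future $\sigma$-algebra is trivial and the coefficient vanishes.)

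The key step is the separation. Take $i=lB+j\le k$ and $i'=l'B+j'\ge k+m$, with $1\le j,j'\le B$. Then $i'-i\ge m\ge B$. If $l=l'$, we would have $|i'-i|=|j'-j|\le B-1$, which is impossible. So $l\ne l'$, and by~\eqref{eq:spatial_gap} we get $d(x_{l,j},x_{l',j'})\ge D$. Every point of $A$ is therefore at distance at least $D$ from every point of $A'$. The pair $(A,A')$ is admissible in~\eqref{eq:spatial_mixing} with $N=n$, so $\alpha(\sigma(Y_i:i\le k),\sigma(Y_i:k+m\le i\le n))\le\alpha^C_n(D)$. Taking the supremum over $k$ gives $\alpha^Y(m)\le\alpha^C_n(D)$.

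One small point needs care: multiple increments might sit at the same location, so that $|A|$ is smaller than the number of indices. This only helps, since the bound $|A|+|A'|\le n$ is what is needed. Points may also coincide within one group, which is harmless because the distance condition only concerns pairs from different groups.

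The second claim is the same argument applied to $A=\{x_{l,1},\ldots,x_{l,B}\}$ and $A'=\{x_{l',1},\ldots,x_{l',B}\}$ with $l\ne l'$. Here $|A|+|A'|\le 2B$, and~\eqref{eq:spatial_gap} gives the separation directly, so the coefficient is bounded by $\alpha^C_{2B}(D)$.

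I expect no real obstacle. The only point needing attention is the index bookkeeping showing that a gap $m\ge B$ forces the two indices into different groups. A smaller gap would allow a block straddling the cut, and~\eqref{eq:spatial_gap} would then not apply.
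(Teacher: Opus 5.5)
Your proposal is correct and follows essentially the same argument as the paper. You identify the past and future $\sigma$-algebras with those generated by the field on $A=\{x_{l,j}:lB+j\le k\}$ and $A'=\{x_{l,j}:k+m\le lB+j\le n\}$, use $m\ge B$ to force the two indices into different groups so that \eqref{eq:spatial_gap} gives the separation, and handle the two-group claim the same way with $|A|+|A'|\le 2B$; your check that $l=l'$ would give $|i'-i|\le B-1$ just spells out a step the paper states in one line.
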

\begin{proof}
	Let $1\le k<n$ and $m\ge B$. If $i\le k$ and $i'\ge k+m$, then $i'-i\ge B$, so $Y_i$ and $Y_{i'}$ belong to different groups, and the corresponding points are at distance at least $D$ by~\eqref{eq:spatial_gap}. Hence the sets $A=\{x_{l,j}:lB+j\le k\}$ and $A'=\{x_{l,j}:k+m\le lB+j\le n\}$ satisfy $d(a,a')\ge D$ for all $a\in A$, $a'\in A'$, and $|A|+|A'|\le n$. Since $\sigma(Y_i:i\le k)=\sigma(C(x):x\in A)$ and $\sigma(Y_i:k+m\le i\le n)=\sigma(C(x):x\in A')$, the first claim follows from~\eqref{eq:spatial_mixing}. The second claim is proved in the same way, with $A$ and $A'$ the sets of increments of the groups $l$ and $l'$.
\end{proof}

Thus the measurements, listed group by group, form a sequence whose mixing coefficients at lags $m\ge B$ are controlled by the spatial dependence at the distance $D$. The proofs of Theorem 2.6 and Corollary 2.10 use the mixing coefficients only through $\alpha^Y(qB)$ with $q\ge2$; the single-block bounds involve the dependence within a group (Remark~\ref{rem:spatial_block}). 

\begin{corollary}[Median-of-means for bulk sampling]\label{cor:spatial}
	Assume~\eqref{eq:spatial_gap}, let $\delta>0$ and $\eps\ge\max_l\P(|\mu_l-\mu|\ge\delta)$, where $\mu_l$ refers to the uncorrupted measurements, and suppose that $\Delta:=\frac12-\frac{n_o}K-\eps>0$. Then, with $\P^*$ denoting outer probability:
	\begin{enumerate}[(i)]
		\item for every integer $1<q\le\frac27K\Delta$,
		\[
		\P^*\bigl(|\hat\mu-\mu|\ge\delta\bigr)\le\Rb_{K,q}\bigl(\Delta,\alpha^C_n(D)\bigr);
		\]
		in particular, if $K\ge7/\Delta$, then
		\[
		\P^*\bigl(|\hat\mu-\mu|\ge\delta\bigr)\le4\Bigl(1+\frac\Delta4\Bigr)^{-\Delta K/14}+\frac{28}{\Delta}\,\alpha^C_n(D);
		\]
		\item without Lemma~\ref{lem:Rio61},
		\[
		\P^*\bigl(|\hat\mu-\mu|\ge\delta\bigr)\le\frac{\frac14+(K-1)\,\alpha^C_{2B}(D)}{K\Delta^2}\le\frac1{4K\Delta^2}+\frac{\alpha^C_{2B}(D)}{\Delta^2}.
		\]
	\end{enumerate}
\end{corollary}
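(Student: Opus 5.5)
We reduce Corollary~\ref{cor:spatial} to Theorem~\ref{thm:MoMproc} and Corollary~\ref{cor:contamination} via the serialization~\eqref{eq:serial}. Take the one-element class $\C=\{y\mapsto y\}$ on $\Y=\R$, applied to $Y_1,\ldots,Y_n$. Condition~\eqref{eq:S} holds trivially. The single-block bound is $\eps_{\delta,B}=\eps$, because $\mu_l(f)=\mu_l-\mu$. The proof of Theorem~\ref{thm:MoMproc} uses identical distribution only through the common centering $\mu$ and the bound on $\E Z_l$, so the finite sequence $Y_1,\ldots,Y_n$ is enough. Steps~1--3 together with the contamination argument give
\[
\{|\hat\mu-\mu|\ge\delta\}\subseteq\Bigl\{\sum_{l}Z_l\ge \tfrac K2-n_o\Bigr\}\subseteq\Bigl\{\tfrac1K\Bigl|\sum_l(Z_l-\E Z_l)\Bigr|\ge\Delta\Bigr\},
\]
with $Z_l=\ind_{\{|\mu_l-\mu|\ge\delta\}}$ computed from the uncorrupted data.

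For (i), extend $(Z_l)_{l<K}$ by zeros, as in the proof of Lemma~\ref{lem:Rio61}, and apply that lemma with $n=K$. Step~4 gives $\alpha^Z(q+1)\le\alpha^Y(qB)$. Because $q\ge2$, the lag satisfies $qB\ge B$, so Proposition~\ref{prop:serial} yields $\alpha^Y(qB)\le\alpha^C_n(D)$. The monotonicity of $\Rb_{K,q}$ in $t$ then gives the first bound. For the explicit form, choose $q=\lfloor 2K\Delta/7\rfloor$. The assumption $K\ge7/\Delta$ gives $2K\Delta/7\ge2$, hence $q\ge2$ and also $q\ge K\Delta/7$. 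Then $\Delta K/(7q)\ge 1/2$. Since $q\le 2K\Delta/7\le K$, we have $K+q\le 2K$, so the argument of the logarithm is at least $1+\Delta/7$.

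This is where I expect the only real bookkeeping difficulty: this estimate gives an exponent of $\tfrac12\log(1+\Delta/7)$, whereas the stated form asks for $(\Delta K/14)\log(1+\Delta/4)$. I would instead take $q=2$. Then $q\le\frac27K\Delta$ because $K\Delta\ge7$, the exponent becomes $\frac{\Delta K}{14}\log\bigl(1+\frac{(2/7)\Delta K}{K+2}\bigr)$, and $K+2\le\frac{8}{7}K$ holds once $K\ge14$. That gives $\frac{(2/7)\Delta K}{K+2}\ge\frac{\Delta}{4}$, which is exactly the claimed form. The condition $K\ge14$ follows from $K\ge 7/\Delta$ and $\Delta\le\frac12$. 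The mixing term is $\frac{28}{\Delta}\alpha^Y(2B)\le\frac{28}{\Delta}\alpha^C_n(D)$ by Proposition~\ref{prop:serial}.

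For (ii), follow Remark~\ref{rem:decrate} and apply Chebyshev to $\sum_l Z_l$ on the event above. This gives the bound
\[
\frac{\var(\sum_lZ_l)}{K^2\Delta^2}.
\]
Bound $\var Z_l\le\frac14$. For $l\ne l'$, the covariance of two indicators satisfies $|\cov(Z_l,Z_{l'})|\le\alpha(\sigma(\text{group }l),\sigma(\text{group }l'))\le\alpha^C_{2B}(D)$ by the second claim of Proposition~\ref{prop:serial}. Summing over the $K(K-1)$ ordered pairs gives $\var\le \frac K4+K(K-1)\alpha^C_{2B}(D)$, which yields the first inequality of (ii). The second inequality is immediate from $(K-1)/K\le1$. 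All probabilities are outer probabilities because the corruption may be non-measurable; the inclusions are into measurable events, so the bounds transfer.
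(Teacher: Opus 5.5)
Your proposal is correct and follows the paper's proof essentially step by step. You serialize and apply Theorem~\ref{thm:MoMproc} with Corollary~\ref{cor:contamination} to the one-element class, bound $\alpha^Y(qB)\le\alpha^C_n(D)$ by Proposition~\ref{prop:serial}, and take $q=2$ (admissible since $K\Delta\ge7$, with $K\ge14$ giving $\frac{(2/7)\Delta K}{K+2}\ge\frac\Delta4$); for (ii) you apply Chebyshev with $\var Z_l\le\frac14$ and the pairwise group bound $|\cov(Z_l,Z_{l'})|\le\alpha^C_{2B}(D)$. The detour through $q=\lfloor 2K\Delta/7\rfloor$ can simply be dropped: $\alpha^C_n(D)$ does not depend on $q$, so the smallest admissible $q$ is the natural choice, as in the paper.
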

\begin{proof}
	(i) The proofs of Theorem~\ref{thm:MoMproc} and Corollary~\ref{cor:contamination} only involve $Y_1,\ldots,Y_n$, so they apply to the sequence~\eqref{eq:serial} and the one-element class $\{y\mapsto y\}$. By Proposition~\ref{prop:serial}, $\alpha^Y(qB)\le\alpha^C_n(D)$, which gives the first bound. Since $\alpha^C_n(D)$ does not depend on $q$, we may take $q=2$, which is admissible if $K\ge7/\Delta$; then $K\ge14$, hence $\frac{(2/7)\Delta K}{K+2}\ge\frac27\cdot\frac78\,\Delta=\frac\Delta4$, and the second bound follows.

	(ii) Let $Z_l=\ind_{\{|\mu_l-\mu|\ge\delta\}}$, computed from the uncorrupted measurements. As in Step~1 of the proof of Theorem~\ref{thm:MoMproc} and in Corollary~\ref{cor:contamination}, $\{|\hat\mu-\mu|\ge\delta\}\subseteq\{\sum_lZ_l\ge\frac K2-n_o\}\subseteq\{\sum_l(Z_l-\E Z_l)\ge K\Delta\}$. Moreover, $\var Z_l\le\frac14$ and, by Proposition~\ref{prop:serial}, $|\cov(Z_l,Z_{l'})|\le\alpha^C_{2B}(D)$ for $l\ne l'$. Hence $\var\sum_lZ_l\le\frac K4+K(K-1)\alpha^C_{2B}(D)$, and Chebyshev's inequality completes the proof.
\end{proof}

Bound (ii) involves only the dependence between pairs of groups, whereas for random fields indexed by $\Z^d$, $d\ge2$, mixing conditions for sets of unbounded size are very restrictive..

\begin{remark}[Single-group bounds]\label{rem:spatial_block}
	The proof of Lemma~\ref{lem:block}(i)--(iii) bounds the variance of a block sum by the sum of the covariances of pairs of observations, which are controlled by the dependence between two single observations. Therefore it carries over to the group $l$ if $A_B$ is replaced by
	\[
	A^{\rm loc}_l:=\frac1{2B}\sum_{i\ne j}\alpha^C_2\bigl(d(x_{l,i},x_{l,j})\bigr),
	\]
	and, in (i), $\sum_{k=1}^B\min\{\alpha^X(k),p(1-p)\}$ by $\frac1{2B}\sum_{i\ne j}\min\{\alpha^C_2(d(x_{l,i},x_{l,j})),p(1-p)\}$. For instance, by (iii), for every $M\ge0$,
	\[
	\P\bigl(|\mu_l-\mu|\ge\delta\bigr)\le\frac{4M^2(1+8A^{\rm loc}_l)}{B\delta^2}+\frac{4\,\E\bigl(|C(x)|-M\bigr)_+}{\delta}.
	\]
	If the increments of a group are close to each other, they are strongly dependent, $A^{\rm loc}_l$ is of order $B$, and this bound does not improve as $B$ grows: averaging within a group only removes the small-scale variability of the material. A positive $\Delta$ then requires the tolerance $\delta$ to exceed the typical deviation of a group average from $\mu$, and the error probability is driven by the number $K$ of well-separated groups.
\end{remark}

\begin{remark}[Choice of the design]\label{rem:spatial_design}
	The two terms in Corollary~\ref{cor:spatial} compete: the first one requires many groups, the second one a large separation $D$, and in a bounded lot the number of groups that are pairwise $D$ apart is limited. If $\mathbb H\subset\R^3$ and $d$ is the Euclidean distance, the open balls of radius $D/2$ centered at one sampling point of each group are disjoint and contained in the $D/2$-neighborhood $\mathbb H^{D/2}$ of $\mathbb H$, hence
	\[
	K\le\frac{\operatorname{Vol}(\mathbb H^{D/2})}{\pi D^3/6}.
	\]
	Thus a small error probability can only be certified if the lot is large compared with the distance at which the spatial dependence becomes negligible. For moderate $K$, bound (ii) is preferable because of the large constants in Lemma~\ref{lem:Rio61}: for $\Delta=\frac14$ it gives $\P^*(|\hat\mu-\mu|\ge\delta)\le\frac4K+16\,\alpha^C_{2B}(D)$, which is below $0.05$ for $K=160$ groups if $\alpha^C_{2B}(D)\le1.5\cdot10^{-3}$, whereas the exponential term of $\Rb_{K,2}$ in (i) falls below $0.025$ only for $K\ge4122$ (and that of the simplified bound only for $K\ge4689$). Moreover, (i) involves $\alpha^C_n(D)$, i.e.\ sets of $n$ points, whereas (ii) only involves $\alpha^C_{2B}(D)$.
\end{remark}

\section*{Declaration of AI use}

During the preparation of this work the authors used ChatGPT (OpenAI) to improve the language, and Claude (Anthropic) to draft proofs and sections, to search and check the literature, and to write a Lean 4 formalization of the main arguments, which was used to check the proofs and is not part of this paper. Specifically, the completion of the proof of Theorem 2.6, the preparation of Sections 3–6, and the Lean formalization were carried out with the assistance of Claude. The authors reviewed and edited all AI-generated content and take full responsibility for the final publication.

\bibliographystyle{siam}
\bibliography{MoM}

\end{document}